\newtheorem{theorem}{Theorem}[section]
\newtheorem{proposition}[theorem]{Proposition}
\newtheorem{definition}[theorem]{Definition}
\newtheorem{example}[theorem]{Example}
\newtheorem{remark}[theorem]{Remark}
\newtheorem{lemma}[theorem]{Lemma}
\def\al{\alpha}
\def\be{\beta}
\def\vp{\varphi}
\def\pa{\partial}
\def\gm{\gamma}
\def\lra{\longrightarrow}
\def\ol{\overline}
\def\k{\operatorname{\mathbf{k}}}
\def\Ann{\operatorname{Ann}}
\def\g{\operatorname{\mathfrak{g}}}
\def\q{\operatorname{\mathfrak{q}}}
\DeclareMathOperator{\LB}{\mathtt{Lb}}
\DeclareMathOperator{\XLB}{\mathtt{XLb}}
\DeclareMathOperator{\AS}{\mathtt{As}}
\DeclareMathOperator{\XAS}{\mathtt{XAs}}
\DeclareMathOperator{\U}{\mathtt{U}}
\DeclareMathOperator{\XU}{\mathtt{XU}}
\DeclareMathOperator{\Ud}{\mathtt{U_d}}
\DeclareMathOperator{\XUd}{\mathtt{XU_d}}
\DeclareMathOperator{\I}{\mathtt{I}}
\DeclareMathOperator{\J}{\mathtt{J}}
\DeclareMathOperator{\Up}{\Upsilon}
\DeclareMathOperator{\Ga}{\Gamma}
\DeclareMathOperator{\Liea}{\mathtt{Lie_1}}
\DeclareMathOperator{\Liel}{\mathtt{Lie_2}}
\DeclareMathOperator{\XLiea}{\mathtt{XLie_1}}
\DeclareMathOperator{\XLiel}{\mathtt{XLie_2}}
\def\Im{\operatorname{Im}}
\def\Ker{\operatorname{Ker}}
\def\Hom{\operatorname{Hom}}
\def\As{\operatorname{\textbf{\textsf{As}}}}
\def\XAs{\operatorname{\textbf{\textsf{XAs}}}}
\def\XLie{\operatorname{\textbf{\textsf{XLie}}}}
\def\Lie{\operatorname{\textbf{\textsf{Lie}}}}
\def\Lb{\operatorname{\textbf{\textsf{Lb}}}}
\def\XLb{\operatorname{\textbf{\textsf{XLb}}}}
\def\Di{\operatorname{\textbf{\textsf{Dias}}}}
\def\XDi{\operatorname{\textbf{\textsf{XDias}}}}
\def\CDi{\operatorname{\textbf{\textsf{CDias}}}}
\def\IDi{\operatorname{\textbf{\textsf{IDias}}}}
\def\cat{\operatorname{cat}}
\def\D{\operatorname{D}}
\def\id{\operatorname{id}}
\begin{document}

\title[More on crossed modules of Lie, Leibniz, associative and diassociative]{More on crossed modules of Lie, Leibniz, associative and diassociative algebras}

\author{Jos\'e Manuel Casas}
\address{Department of Applied Mathematics I, University of Vigo, E. E. Forestal, 36005	Pontevedra, Spain}
\email{jmcasas@uvigo.es}
\author{Rafael ~F.~Casado}
\address{Department of Algebra, University of Santiago de Compostela\\
15782 Santiago de Compostela, Spain}
\email{rapha.fdez@gmail.com}
\author{Emzar Khmaladze}
\address{A. Razmadze Mathematical Institute, Tbilisi State University,
Tamarashvili St. 6, 0177 Tbilisi, Georgia}
\email{e.khmal@gmail.com}
\author{Manuel Ladra}
\address{Department of Algebra, University of Santiago de Compostela\\
15782 Santiago de Compostela, Spain}
\email{manuel.ladra@usc.es}

\numberwithin{equation}{section}

\thanks{The authors were supported by Ministerio de Econom\'ia y Competitividad (Spain), grant MTM2013-43687-P (European FEDER support included).
 The third and fourth authors were supported by Xunta de Galicia, grants EM2013/016 and GRC2013-045 (European FEDER support included).
 The third author was also supported by Shota Rustaveli National Science Foundation, grant FR/189/5-113/14.}

\begin{abstract}
 Adjoint functors between the categories of crossed modules of dialgebras and Leibniz algebras are constructed.
  The well-known relations between the categories of Lie, Leibniz, associative algebras and dialgebras are extended to the respective categories of crossed modules.
\end{abstract}
\subjclass[2010]{17A30, 17A32, 17B35, 18A40}
\keywords{Leibniz algebra, associative dialgebra, crossed module, universal enveloping crossed module, adjunction}
\maketitle

\section{Introduction}

Groups, associative algebras and Lie algebras are related by well-known adjoint functors: the group algebra functor is left adjoint to the unit group functor,
 as well as the universal enveloping algebra functor is left adjoint to the Liezation functor. These classical facts have been extended to the categories of crossed modules of groups,
  associative algebras and Lie algebras in \cite{CaInKhLa} and \cite{CaCaKhLa}.

In the non-commutative framework, when Lie algebras are replaced by Leibniz algebras, the analogous objects to associative algebras are diassociative algebras (or dialgebras, for short),
 introduced and studied by Loday \cite{Lo}. There is an adjunction between the categories of Leibniz algebras and dialgebras,
  which is analogous and related to the one between the categories of Lie and associative algebras.

The aim of this paper is to extend this adjunction to the categories of crossed modules of Leibniz algebras and dialgebras,
 and to establish a connection between that adjunction and the one for the categories of crossed modules of Lie and associative algebras \cite{CaCaKhLa}.

In this paper, following the general notion of crossed module in a category of groups with operations (see \cite{Pa,Po}),
 we introduce crossed modules of dialgebras, study their properties (Section~\ref{S:dia}) and show their equivalence with $\cat^1$-dialgebras and internal categories (Section~\ref{S:equiv}).
 We construct the universal enveloping crossed module of a Leibniz crossed module, which is a crossed module of dialgebras.
  This construction defines a functor, and we prove that it is left adjoint to the naturally defined functor from crossed modules of dialgebras to Leibniz crossed modules.
   We show that this adjunction is a natural generalization of that between dialgebras and Leibniz algebras (Section~\ref{S:univ}).
    Finally, we establish relations between categories of  crossed modules of Lie, Leibniz, associative algebras and dialgebras in terms of commutative diagrams of categories and functors (Section~\ref{S:rel}).

\subsection*{Notations and conventions} Throughout the paper, we fix a commutative ring $\k$ with unit. All algebras are considered over $\k$.
  The categories of Lie, Leibniz and (non-unital) associative algebras will be denoted by $\Lie$, $\Lb$ and $\As$, respectively. We say that a diagram of categories and functors
\[
\xymatrix{
	C_1 \ar[d]_{G_1} \ar[r]^{F_1} & C_2 \ar[d]^{F_2} \\
	C_3 \ar[r]_{G_2} &    C_4
}
\]
is commutative if $F_2\circ F_1=G_2\circ G_1$ or even if there is a natural isomorphism of functors $F_2\circ F_1\cong G_2\circ G_1$.

 \section{Preliminaries}

 \begin{definition}
[\cite{Lo}] An associative dialgebra (or simply dialgebra), also called diassociative algebra by some authors, is a $\k$-module $D$ equipped with two $\k$-linear maps
 \begin{equation*}
 \dashv, \; \vdash \colon D\otimes D\rightarrow D,
  \end{equation*}
  satisfying, for all $x,y,z\in D$, the following axioms
 \begin{align}
 \label{eq1} &(x\dashv y)\dashv z = x\dashv (y\vdash z),\\
\label{eq2} &(x\dashv y)\dashv z = x\dashv (y\dashv z),\\
 \label{eq3} &(x\vdash y)\dashv z = x\vdash (y\dashv z),\\
\label{eq4} &(x\dashv y)\vdash z = x\vdash (y\vdash z), \\
 \label{eq5} &(x\vdash y)\vdash z = x\vdash (y\vdash z).
 \end{align}
 \end{definition}
The maps $\dashv$ and $\vdash$ are called \emph{left and right products}, respectively.
 A \emph{morphism of dialgebras} is a $\k$-linear map preserving both left and right products. We denote by ${\Di}$ the category of dialgebras.

A dialgebra $D$ is called \emph{abelian} if both left and right products are trivial, that is, $x\dashv y=x\vdash y=0$, for all $x,y\in D$. Note that any $\k$-module can be regarded as an abelian dialgebra.

A submodule $I$ of a dialgebra $D$ is called an \emph{ideal} of $D$ if $x\dashv y, x\vdash y, y\dashv x, y\vdash x \in I$ for any $x\in I$ and $y\in D$.

The \emph{annihilator} of a dialgebra $D$ is given by:
\[
\Ann(D)=\{ x\in D \ | \ x\dashv y=y\dashv x=x\vdash y=y\vdash x=0, \ \text{for all} \ y\in D\}.
\]
It is immediate to check that $\Ann(D)$ is indeed an ideal of $D$.

Any associative algebra becomes a dialgebra with $x \dashv y = x y = x \vdash y$, so we have an inclusion functor $\As\hookrightarrow \Di $.
 Let  $\AS \colon \Di\to \As $  denote its left adjoint functor, which assigns to a dialgebra $D$ the quotient of $D$ by the ideal generated
 by all elements $x\dashv y-x\vdash y$, $x,y\in D$ (see \cite[Subsection 2.6]{Lo}).

Let $V$ be a $\k$-module and $T(V)$ denote the tensor module ${T}(V)=\bigoplus_{n\geq 0} V^{\otimes n}$. The \emph{free dialgebra} over $V$
 is explicitly described in \cite[Theorem 2.5]{Lo} as follows: its underlying $\k$-module  is
 $T(V)\otimes V\otimes T(V)$ and the two products are determined by
  \begin{align*}
 (v_{-n} \cdots v_{-1}\otimes v_0\otimes v_1 \cdots v_m ) &\dashv (w_{-p} \cdots w_{-1}\otimes w_0\otimes w_1 \cdots w_q ) \\
 & = v_{-n} \cdots v_{-1}\otimes v_0\otimes v_1 \cdots v_m w_{-p}\cdots w_q,\\
   (v_{-n} \cdots v_{-1}\otimes v_0\otimes v_1 \cdots v_m ) &\vdash (w_{-p} \cdots w_{-1}\otimes w_0\otimes w_1 \cdots w_q ) \\
 & = v_{-n} \cdots v_{m}w_{-p}\cdots w_{-1}\otimes w_0\otimes w_1 \cdots w_q,
   \end{align*}

Let us recall from \cite{Lo3} that a \emph{Leibniz algebra} is a $\k$-module $\g$ equipped with a $\k$-linear map (\emph{Leibniz bracket}) $[ \ ,\ ] \colon \g\otimes \g\to\g$ satisfying the Leibniz identity
\begin{equation}\label{Leibniz}
[x,[y,z]]=[[x,y],z]-[[x,z],y], \quad x,y,z\in \g.
\end{equation}
If the bracket is antisymmetric, then $\g$ is a Lie algebra.

The key point in the introduction of dialgebras is that the bracket
\begin{equation}\label{eqDiasToLb}
[x,y]=x\dashv y - y\vdash x
\end{equation}
 endows the dialgebra with a Leibniz algebra structure. Of course, if left and right products coincide, we get a Lie algebra.
  Thus, we have a functor $\LB \colon \Di\to \Lb$, which admits as left adjoint the functor $\Ud \colon \Lb\to \Di$, that assigns to a Leibniz algebra
   $\mathfrak{g}$ its \emph{universal enveloping dialgebra} (see \cite{Lo}), defined as the following quotient of the free dialgebra over the underlying $\k$-module of $\g$
\[
\Ud(\mathfrak{g})=T(\mathfrak{g})\otimes \mathfrak{g}\otimes T(\mathfrak{g})/ \{[x,y]-x\dashv y+y\vdash x \;\mid \;x,y\in \mathfrak{g}\}.
\]
Therefore, we have the following diagram of categories and functors

\begin{equation}\label{diagram1}
\xymatrix @=20mm {
	\As \ar@<1.1ex>[d]^{\cap} \ar@<-1.1ex>[r]^{\perp}_{\Liea} & \Lie \ar@<-1.1ex>[l]_{\U} \ar@<-1.1ex>[d]^{\vdash}_{\cap} \\
	\Di \ar@<1.1ex>[u]_{\dashv}^{\AS} \ar@<1.1ex>[r]_{\ \top}^{\LB} &   \Lb \ar@<1.1ex>[l]^{\Ud} \ar@<-1.1ex>[u]_{\Liel}
}
\end{equation}
in which the respective outer and inner squares of left and right adjoint functors are commutative.
$\Liea \colon \As\to \Lie$ (resp. $\Liel \colon \Lb\to \Lie$) is the Liezation functor, which assigns to an associative algebra $A$
 its Lie algebra with the Lie bracket $[a,b]=ab-ba$, $a,b\in A$ (resp. to a Leibniz algebra $\g$ the quotient of $\g$ by the ideal generated
  by all elements $[g,g]$, $g\in \g$) and $\U$ is the universal enveloping algebra functor for Lie algebras.

\section{Crossed modules of dialgebras}\label{S:dia}

It is known (see e.g. \cite{Mo}) that the category of dialgebras is a category of interest in the sense of Orzech \cite{Or}, i.e.
 it is a category of groups with operations satisfying two additional axioms (see e.g. \cite{CaDaLa}).
  In \cite{Po}, Porter introduced the concept of crossed module in categories of groups with operations (see also in \cite{CaDaLa}
   the same notion in categories of interest) and showed that crossed modules are equivalent to internal categories.

In this section we examine the notions of action, semi-direct product and crossed module of dialgebras. Of course, our definitions agree with those of \cite{Po}.

\begin{definition}
Let $D$ and $L$ be dialgebras. We will say that $D$ acts on $L$ if four bilinear maps, two of them denoted by the symbol $\dashv$ and the other two by $\vdash$,
\begin{align*}
& \dashv \colon D\otimes L  \rightarrow L, \qquad \dashv \colon L\otimes D  \rightarrow L,\\
&  \vdash \colon D\otimes L  \rightarrow L, \qquad \vdash \colon L\otimes D  \rightarrow L
\end{align*}
are given, such that $30$ equalities hold, which are obtained from the equations \eqref{eq1}--\eqref{eq5}
 by taking one variable in $D$ and two variables in $L$ ($15$ equalities), and one variable in $L$ and two variables in $D$ ($15$ more equalities).

The action is called {trivial} if all these four maps are trivial.
\end{definition}

\begin{example}\label{Example_action}\hfill

\noindent $(i)$  If $ 0\to L \xrightarrow{\sigma}
{E} \xrightarrow{\pi} {D}\to 0 $ is a split short
exact sequence of dialgebras, that is, there exists a
homomorphism of dialgebras $\vp \colon {D}\to {E}$ such that $\pi \vp
=\id_{{D}}$, then there is an action of the dialgebra ${D}$ on ${L}$, defined in the standard
way, by taking left and right products in the dialgebra ${E}$:
\begin{align*}
x\dashv l= \vp(x)\dashv \sigma(l), \qquad l\dashv x=  \sigma(l) \dashv \vp(x), \\
x\vdash l= \vp(x)\vdash \sigma(l), \qquad l\vdash x=  \sigma(l) \vdash \vp(x),
\end{align*}
\noindent for any $x\in D$, $l\in L$.
\;

\noindent $(ii)$ If ${D}$ is a subdialgebra of a
dialgebra ${E}$ (maybe ${D}={E}$) and ${L}$ is an ideal in ${E}$, then the left and right products in ${E}$ yield
an action of ${D}$ on ${L}$.

\;

\noindent $(iii)$ Any homomorphism of dialgebras
${D}\to {L}$ induces an action of ${D}$ on ${L}$, in the standard way, by taking images of elements of
${D}$ and left and right products in ${L}$.

\;

\noindent $(iv)$ If $\mu \colon {L}\to D$ is a surjective morphism of dialgebras with $\Ker\mu$ in the annihilator of
${L}$, then there is an action of $D$ on ${L}$, defined in the standard way, by choosing pre-images of
elements of $D$ and taking left and right products in $L$.

\;

\noindent $(v)$ If ${L}$ is a bimodule over a dialgebra $D$ (for the definition see \cite[Subsection 2.3]{Lo}), thought as an abelian dialgebra,
 then the bimodule structure defines an action of $D$ on the (abelian) dialgebra $L$.

\end{example}

Note that if a dialgebra $D$ acts on a dialgebra $L$, then $L$, as a $\k$-module, has a structure of bimodule over the dialgebra $D$.

\begin{definition}
 Given an action  of a dialgebra $D$ on a dialgebra $L$, we define the semi-direct product $L \rtimes D$ as the dialgebra with underlying $\k$-module $L \oplus D$, endowed with left and right  products given by
\begin{align*}
(l_1,x_1)\dashv (l_2,x_2)= (l_1 \dashv l_2+x_1\dashv l_2 + l_1\dashv x_2, x_1\dashv x_2),\\
(l_1,x_1)\vdash (l_2,x_2)= (l_1 \vdash l_2+x_1\vdash l_2 + l_1\vdash x_2, x_1\vdash x_2),
\end{align*}
for all $x_1,x_2 \in D$ and $l_1, l_2 \in L$.
\end{definition}
Straightforward calculations show that $L \rtimes D$ is indeed a dialgebra and there is a split short exact sequence of dialgebras
\[
0\to L\xrightarrow{i} L \rtimes D \xrightarrow{\pi} D\to 0,
\]
where $i(l) = (l,0)$, $\pi(l,x)= x$ and the splitting $s \colon D\to L \rtimes D$ is given by $s(x)=(0,x)$.
Note that the action of $D$ on $L$ defined by this split short exact sequence, as in Example~\ref{Example_action}(i), coincides with the initial one.

\begin{definition}\label{Definition_crossed}
A crossed module of dialgebras is a homomorphism of dialgebras
$\mu \colon {L}\to D$, together with an action of $D$ on ${L}$ satisfying the following
conditions:
\begin{align}
& \qquad\qquad \mu(x\dashv l)=x\dashv \mu(l), \qquad \mu(x\vdash l)=x\vdash \mu(l),\label{cr1}\tag{cr1}\\
& \qquad\qquad \mu(l\dashv x)=\mu(l)\dashv x, \qquad \mu(l\vdash x)=\mu(l)\vdash x,\label{cr2}\tag{cr2}\\
& \qquad\qquad \mu(l)\dashv l'=l\dashv l'=l\dashv \mu(l'),\label{cr3}\tag{cr3}\\
& \qquad\qquad \mu(l)\vdash l'=l\vdash l'=l\vdash \mu(l'),\label{cr4}\tag{cr4}
\end{align}
for all $x\in D$ and $l,l'\in L$.
\end{definition}

 A \emph{morphism of crossed modules of dialgebras}
$({L} \xrightarrow{\mu} D)\to ({L'} \xrightarrow{\mu'} D')$ is a pair
$(\al,\be)$, where $\al \colon {L}\to {L'}$ and
$\be \colon D\to \D'$ are homomorphisms of dialgebras satisfying:
\begin{enumerate}[label=(\roman*)]
\item $\mu'\al=\be\mu$,
\item $\al$ preserves the action
of $D$ via $\be$, i.e.,
\begin{align*}
&\al(x\dashv l)=\be(x)\dashv \alpha(l),\quad \alpha(l\dashv x)=\al(l)\dashv \be(x),\\
&\al(x\vdash l)=\be(x)\vdash \alpha(l),\quad \alpha(l\vdash x)=\al(l)\vdash \be(x).
\end{align*}
for all $x\in D$ and $l\in L$.
\end{enumerate}

Crossed modules of dialgebras constitute a category which will be denoted by $\XDi$.

\

The first two examples immediately below show that the concept of
crossed module of dialgebras generalizes both the concepts of ideal
and bimodule of a dialgebra.

\begin{example}\hfill

\noindent $(i)$ An inclusion ${L}\hookrightarrow D$ of an ideal
${L}$ of a dialgebra $D$ is a crossed module, where the action of $D$ on ${L}$ is
given by left and right products  in $D$, as in Example~\ref{Example_action}(ii). Conversely, if
$\mu \colon {L}\to D$ is a crossed module of dialgebras which is injective, then, by Lemma~\ref{Lemma_crossed}(ii) below,
${L}$ is isomorphic to an ideal of $D$.

\;

\noindent $(ii)$ For any bimodule $L$ over a dialgebra $D$ the trivial map $0 \colon L\to D$ is a crossed module with the action of $D$
on the (abelian) dialgebra $L$ described in Example~\ref{Example_action}(v). Conversely, if $0 \colon L\to D$ is a crossed module of dialgebras, then
$L$ is necessarily an abelian dialgebra and the action of $D$ on $L$ determines on $L$ a bimodule structure over $D$.

\;

\noindent $(iii)$ Any homomorphism of dialgebras $\mu \colon {L}\to D$, with ${L}$ abelian and
$\Im\mu$ in the annihilator of $D$, provides a crossed module with the trivial action of $D$  on $L$.

\;

\noindent $(iv)$ Any surjective morphism of dialgebras $\mu \colon {L}\to D$, with $\Ker\mu$ in the annihilator of $L$ and the action of $D$ on $L$
described in Example~\ref{Example_action}(iv), is a crossed module of dialgebras.
\end{example}

The following result is an immediate consequence of Definition~\ref{Definition_crossed}.

\begin{lemma}\label{Lemma_crossed}
Let $\mu \colon {L}\to D$ be a crossed module of dialgebras. Then, the following conditions hold:
\begin{enumerate}[label=(\roman*)]
\item The kernel of $\mu$ is contained in the annihilator of $L$.
\item The image of $\mu$ is an ideal in $D$.
\item $\Im\mu$ acts trivially on the annihilator $\Ann(L)$, and so trivially on
$\Ker\mu$. Hence, $\Ker\mu$ inherits an action of $D/\Im\mu$, making $\Ker\mu$ a bimodule over the
dialgebra $D/\Im\mu$.
\end{enumerate}
\end{lemma}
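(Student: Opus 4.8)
The plan is to read off all three assertions directly from the four crossed module axioms \eqref{cr1}--\eqref{cr4} together with the definition of the annihilator, so that the proof reduces to bookkeeping over the four products.

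For (i), I would take $k\in\Ker\mu$, so $\mu(k)=0$, and test $k$ against an arbitrary $l\in L$. The left and right equalities of \eqref{cr3} give $k\dashv l=\mu(k)\dashv l=0$ and $l\dashv k=l\dashv\mu(k)=0$; the analogous use of \eqref{cr4} yields $k\vdash l=l\vdash k=0$. Hence $k\in\Ann(L)$. For (ii), I would fix $l\in L$ and $d\in D$ and use \eqref{cr1}, \eqref{cr2} to rewrite each of the four products of $\mu(l)$ with $d$ as the image of an element of $L$: for instance $d\dashv\mu(l)=\mu(d\dashv l)$ and $\mu(l)\dashv d=\mu(l\dashv d)$, and similarly for $\vdash$. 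This exhibits $\Im\mu$ as an ideal of $D$.

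For (iii) the first step is to show that $\Im\mu$ acts trivially on $\Ann(L)$. Given $a\in\Ann(L)$ and $\mu(l)\in\Im\mu$, I would again use \eqref{cr3} to pass the action of $\mu(l)$ to an internal product in $L$: the left equality gives $\mu(l)\dashv a=l\dashv a$, while the right equality (applied with $l$ itself as a preimage of $\mu(l)$, i.e.\ $a\dashv\mu(l)=a\dashv l$) handles the right action; both vanish because $a\in\Ann(L)$. The same argument with \eqref{cr4} kills the two $\vdash$-products, so $\mu(l)$ acts trivially on $a$. Since part (i) gives $\Ker\mu\subseteq\Ann(L)$, it follows at once that $\Im\mu$ acts trivially on $\Ker\mu$. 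It remains to organise this into a bimodule structure. I would first check that the action of $D$ restricts to $\Ker\mu$: for $k\in\Ker\mu$ and $d\in D$, axioms \eqref{cr1}, \eqref{cr2} give $\mu(d\dashv k)=d\dashv\mu(k)=0$ and $\mu(k\dashv d)=\mu(k)\dashv d=0$ (and likewise for $\vdash$), so all four products land in $\Ker\mu$. Because $\Im\mu$ acts trivially on $\Ker\mu$, this restricted action factors through the quotient dialgebra $D/\Im\mu$. Finally, $\Ker\mu$ is an abelian dialgebra, being contained in $\Ann(L)$, so the earlier observation that an action of a dialgebra on another endows the underlying $\k$-module with a bimodule structure (with the converse recorded in Example~\ref{Example_action}(v)) turns this $D/\Im\mu$-action into the desired bimodule structure.

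I do not anticipate a genuine obstacle here. The only points requiring care are the direction of each product in \eqref{cr3}--\eqref{cr4}, and, in the trivial-action step, the choice of a convenient preimage of $\mu(l)$ so that the correct equality of \eqref{cr3} (or \eqref{cr4}) applies to the left versus the right action.
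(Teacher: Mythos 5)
Your proposal is correct, and it is precisely the routine verification the paper has in mind: the paper states this lemma without proof, calling it ``an immediate consequence of Definition~\ref{Definition_crossed}'', and your argument is exactly that direct bookkeeping from axioms \eqref{cr1}--\eqref{cr4} (including the needed care about which equality of \eqref{cr3}/\eqref{cr4} handles the left versus right action, and the final appeal to the paper's observation that an action of a dialgebra on an abelian dialgebra is a bimodule structure).
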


\begin{proposition} \hfill
\begin{enumerate}[label=(\roman*)]
\item A homomorphism of dialgebras $\mu \colon {L}\to D$ is a crossed module if and only if the maps
\begin{equation}\label{eq_ho1}
(\mu,\id_{D})\colon{L}\rtimes D\to D\rtimes D
\end{equation}
and
\begin{equation}\label{eq_ho2}
(\id_{{L}},\mu) \colon {L}\rtimes {L}\to {L}\rtimes D
\end{equation}
are homomorphisms of dialgebras.

\item  If $\mu \colon {L}\to D$ is a crossed module of dialgebras, then
\begin{equation}\label{eq_ho3}
{L}\rtimes D\to {L}\rtimes D, \quad (l,x)\mapsto (-l,\mu( l)+x)
\end{equation}
is a homomorphism of dialgebras.
\end{enumerate}
\end{proposition}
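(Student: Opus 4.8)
The plan is to verify both parts by direct expansion using the explicit semi-direct product formulas, systematically exploiting that $\mu$ is \emph{already} assumed to be a dialgebra homomorphism $L\to D$; consequently the ``pure'' terms lying wholly in $L$ or wholly in $D$ match automatically, and only the mixed terms carry information.

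For part (i) I would treat the two maps separately. The map \eqref{eq_ho1} sends $(l,x)\mapsto(\mu(l),x)$, where $D$ acts on $D$ by its own products. Expanding $(\mu,\id_D)$ applied to $(l_1,x_1)\dashv(l_2,x_2)$ and comparing with the product of the images, the $D$-components agree at once, and in the other component the terms $\mu(l_1\dashv l_2)$ and $\mu(l_1)\dashv\mu(l_2)$ cancel because $\mu$ is a homomorphism; what survives is $\mu(x_1\dashv l_2)+\mu(l_1\dashv x_2)=x_1\dashv\mu(l_2)+\mu(l_1)\dashv x_2$, which splits (set $l_1=0$, then $x_1=0$) into $\mu(x\dashv l)=x\dashv\mu(l)$ and $\mu(l\dashv x)=\mu(l)\dashv x$, the $\dashv$ halves of \eqref{cr1} and \eqref{cr2}; the $\vdash$ computation supplies the $\vdash$ halves. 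Dually, the map \eqref{eq_ho2} sends $(a,b)\mapsto(a,\mu(b))$ from $L\rtimes L$ (with $L$ acting on $L$) to $L\rtimes D$; the $\mu$-image component matches since $\mu$ is a homomorphism, and after cancelling $a\dashv c$ the $L$-component reduces to $b\dashv c+a\dashv d=\mu(b)\dashv c+a\dashv\mu(d)$, splitting into $b\dashv c=\mu(b)\dashv c$ and $a\dashv d=a\dashv\mu(d)$, together constituting \eqref{cr3}; the $\vdash$ case gives \eqref{cr4}. Combining the two equivalences, both maps are homomorphisms exactly when all of \eqref{cr1}--\eqref{cr4} hold, which is precisely the statement that $\mu$ is a crossed module.

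For part (ii), writing $\theta$ for the map \eqref{eq_ho3}, I would check directly that $\theta$ preserves $\dashv$, the $\vdash$ case being identical with \eqref{cr4} in place of \eqref{cr3}; $\k$-linearity is clear. Applying $\theta$ to $(l_1,x_1)\dashv(l_2,x_2)$ and rewriting the $\mu$-images of the mixed terms via \eqref{cr1} and \eqref{cr2}, the $D$-component collapses to $(\mu(l_1)+x_1)\dashv(\mu(l_2)+x_2)$, which is exactly the $D$-component of $\theta(l_1,x_1)\dashv\theta(l_2,x_2)$. For the $L$-component, expanding $\theta(l_1,x_1)\dashv\theta(l_2,x_2)$ yields $l_1\dashv l_2-\mu(l_1)\dashv l_2-x_1\dashv l_2-l_1\dashv\mu(l_2)-l_1\dashv x_2$, and \eqref{cr3}, in the form $\mu(l_1)\dashv l_2=l_1\dashv l_2=l_1\dashv\mu(l_2)$, collapses the three $L\dashv L$ summands so that this equals $-(l_1\dashv l_2+x_1\dashv l_2+l_1\dashv x_2)$, matching the $L$-component of $\theta$ applied to the product.

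The computations are routine bilinear bookkeeping, and I expect no serious obstacle. The only step deserving a word of justification is the splitting used in part (i): each surviving identity is linear in two independent variables, so setting either of them to zero recovers the corresponding crossed-module relation individually, and these pieces are genuinely independent rather than forced consequences of one another.
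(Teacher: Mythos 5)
Your proposal is correct and follows essentially the same route as the paper: the paper's own proof simply asserts that \eqref{eq_ho1} (resp. \eqref{eq_ho2}) is a homomorphism if and only if \eqref{cr1} and \eqref{cr2} (resp. \eqref{cr3} and \eqref{cr4}) hold, and that \eqref{eq_ho3} is a homomorphism by \eqref{cr1}--\eqref{cr4}, leaving the expansions to the reader. Your write-up supplies exactly those omitted computations (including the correct observation that the purely-$L$ and purely-$D$ terms cancel because $\mu$ is a homomorphism, and the variable-splitting needed for the converse direction), so it is a fleshed-out version of the paper's argument rather than a different one.
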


\begin{proof}
\eqref{eq_ho1} (resp. \eqref{eq_ho2} ) is a homomorphism  if and only if the conditions \eqref{cr1} and \eqref{cr2} (resp. \eqref{cr3} and \eqref{cr4} ) hold.
 On the other hand,  \eqref{eq_ho3} is a homomorphism because of the conditions \eqref{cr1}--\eqref{cr4}.
\end{proof}

\section{Equivalences with cat$^1$-dialgebras and internal categories} \label{S:equiv}

By analogy to Loday's original notion of cat$^1$-groups \cite{Lo2}, we state the following definition.
\begin{definition}
A cat$^1$-dialgebra $({E},{D},s,t)$
consists of a dialgebra ${E}$ together with a
subdialgebra ${D}$ and two homomorphisms of
dialgebras $s,t \colon {E}\to {D}$ satisfying the following conditions:
\begin{align*}
& \qquad\qquad s|_{D}=t|_{D}=\id_{D}\;,\label{ct1}\tag{ct1}\\
& \qquad\qquad \Ker s \dashv \Ker t= \Ker t \dashv \Ker s = \Ker s \vdash \Ker t=\Ker t \vdash \Ker s =0\;.\label{ct2}\tag{ct2}
\end{align*}
\end{definition}

\noindent A \emph{morphism of cat$^1$-dialgebras} $({E},{D},s,t)\to({E'},{D'},s',t')$
is a homomorphism of dialgebras $f \colon {E}\to {E'}$ such that $f({D})\subseteq {D'}$ and
$s'f=f|_{D}\, s$, $t'f=f|_{D}\,t$. We denote by
${\CDi}$ the category of cat$^1$-dialgebras.

\

We associate to a cat$^1$-dialgebra $({E},{D},s,t)$ a crossed module of dialgebras as follows: ${L}=\Ker s$, $\mu=t|_{L}$ and the action of ${D}$ on ${L}$ is given by the left and right products in
${E}$ (see Example~\ref{Example_action}(ii)). We claim that $\mu \colon {L}\to D$ is a crossed module. Moreover, we
have the following result:

\begin{lemma}\label{Lemma_cat1}
The above assignment defines a functor $\Phi \colon \CDi\to \XDi$.
\end{lemma}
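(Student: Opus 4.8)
The plan is to verify first that the claimed construction genuinely produces a crossed module, and then to promote the assignment to a functor by checking its behaviour on morphisms. For the object level, given a cat$^1$-dialgebra $({E},{D},s,t)$, I would set ${L}=\Ker s$, $\mu=t|_{{L}}$, and equip ${L}$ with the ${D}$-action coming from the left and right products in ${E}$ as in Example~\ref{Example_action}(ii); this is legitimate because ${L}=\Ker s$ is an ideal of ${E}$ and ${D}$ is a subdialgebra of ${E}$. The first thing to confirm is that $\mu$ actually lands in ${D}$ and is a dialgebra homomorphism, which is immediate since $t$ is a dialgebra homomorphism with image in ${D}$ and ${L}\seq{E}$.

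Next I would check the four crossed module axioms \eqref{cr1}--\eqref{cr4} of Definition~\ref{Definition_crossed}. Conditions \eqref{cr1} and \eqref{cr2} follow directly from $t$ being a dialgebra homomorphism together with $t|_{{D}}=\id_{{D}}$ from \eqref{ct1}: for instance $\mu(x\dashv l)=t(x\dashv l)=t(x)\dashv t(l)=x\dashv \mu(l)$ for $x\in{D}$, $l\in{L}$, and similarly for the remaining three products. The substantive axioms are \eqref{cr3} and \eqref{cr4}, which is where condition \eqref{ct2} enters. To prove $\mu(l)\dashv l'=l\dashv l'$ for $l,l'\in{L}=\Ker s$, I would consider the element $l-\mu(l)=l-t(l)$ and observe that it lies in $\Ker t$, while $l'\in\Ker s$; then $(l-t(l))\dashv l'\in\Ker t\dashv\Ker s=0$ by \eqref{ct2}, giving $l\dashv l'=t(l)\dashv l'=\mu(l)\dashv l'$. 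The other equalities in \eqref{cr3} and \eqref{cr4} follow by the same device, using the appropriate clause of \eqref{ct2} with the products $\dashv$ and $\vdash$ and with $\Ker s\dashv\Ker t$ etc.; one must be careful to note that $t(l)\in{D}$ so that the middle expression is the $\mu(l)$-action on $l'$, and that the annihilation is symmetric in the two kernels. This verifies the claim that $\mu\colon{L}\to{D}$ is a crossed module.

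For the morphism level, given a morphism of cat$^1$-dialgebras $f\colon({E},{D},s,t)\to({E'},{D'},s',t')$, I would define $\Phi(f)=(\alpha,\beta)$ where $\beta=f|_{{D}}\colon{D}\to{D'}$ and $\alpha=f|_{\Ker s}\colon\Ker s\to\Ker s'$. The key point is that $\alpha$ is well defined, i.e. $f(\Ker s)\seq\Ker s'$, which holds because $s'f=f|_{{D}}\,s$ forces $s'(f(l))=f(s(l))=0$ for $l\in\Ker s$. Then I would check the two conditions for a morphism in $\XDi$: the square condition $\mu'\alpha=\beta\mu$ reduces to $t'f=f|_{{D}}\,t$ restricted to $\Ker s$, and the equivariance of $\alpha$ with respect to the four products follows from $f$ being a dialgebra homomorphism. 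Finally, functoriality, namely $\Phi(\id)=\id$ and $\Phi(g\circ f)=\Phi(g)\circ\Phi(f)$, is immediate from the definitions since restriction of homomorphisms is compatible with composition and $g(\Ker s)\seq\Ker s'$ ensures the restrictions compose correctly.

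I expect the only genuine obstacle to be the verification of \eqref{cr3} and \eqref{cr4}, where one must correctly exploit the full strength of \eqref{ct2}; the remaining verifications are routine bookkeeping with dialgebra homomorphisms. The $\dashv$--$\vdash$ asymmetry in the crossed module axioms means one should keep track of which of the four products in \eqref{ct2} applies to each of the two equalities within \eqref{cr3} and \eqref{cr4}, but no real difficulty arises beyond this care.
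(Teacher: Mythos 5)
Your proposal is correct and follows essentially the same route as the paper's proof: conditions \eqref{cr1} and \eqref{cr2} from \eqref{ct1} together with $t$ being a homomorphism, conditions \eqref{cr3} and \eqref{cr4} from \eqref{ct2} via the observation that $l-t(l)\in\Ker t$, and morphisms handled by restriction. The paper leaves all of these verifications as ``easy to check,'' so your write-up simply supplies the details (in particular the $l-t(l)\in\Ker t$ trick) that the published proof omits.
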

\begin{proof}
It is easy to check that, with the above notation, $\mu \colon {L}\to D$ is a crossed module. Indeed,
conditions \eqref{cr1} and \eqref{cr2} are trivially verified thanks to \eqref{ct1}, whilst conditions \eqref{cr3} and \eqref{cr4} follow from \eqref{ct2}.
Now we define $\Phi \colon \CDi\to \XDi$ by
\[
\Phi(E,D,s,t)=(\mu \colon {L}\to D) \quad \text{and} \quad \Phi(f)=(f|_{L},f|_{D})\;,
\]
where
$f$ is a morphism in $\CDi$. It is easy to see that
the pair $(f|_{L},f|_{D})$ is indeed a morphism of crossed modules  and $\Phi$ is a functor.
\end{proof}

\

We denote by $\IDi$ the category of
internal categories in ${\Di}$, that is, objects
$({E},{D},s,t,\sigma,\gm)$ in ${\IDi}$ are diagrams of dialgebras of the form
\[
\xymatrix@C=0.5cm{
  {E}\times_{{D}}{E}\ar[r]^{\ \ \ \gm}& {E} \ar@<0.6mm>[rr]^{s}\ar@<-0.6mm>[rr]_{t} &&
   D\ar@/_1.3pc/[ll]_{\sigma}
    }
\]
such that $s\sigma=t\sigma=id_{D}$ and the `operation'
$\gm$ satisfies the usual axioms of a category. Morphisms
$({E},{D},s,t,\sigma,\gm)\to (E,D',s',t',\sigma',\gm')$ are pairs
$(\vp \colon E\to {E'}, \psi \colon D\to{D'})$ of homomorphisms of dialgebras such that  $\psi s=s' \vp$,
$\psi t=t'\vp$, $\vp\sigma=\sigma' \psi$ and $\vp\gm = \gm'
(\vp\times \vp)$.

Given an  internal category $(E,D,s,t,\sigma,\gm)$ in $\Di$, we construct its corresponding crossed module of dialgebras
$\mu \colon {L}\to D$, where ${L}=\Ker s$, $\mu=t|_{L}$ and the action of $D$ on ${L}$ is induced by the homomorphism $\sigma$ (as in Example~\ref{Example_action}(iii)).

\begin{lemma}\label{internal}
The above assignment defines a functor $\Psi \colon \IDi\to \XDi$.
\end{lemma}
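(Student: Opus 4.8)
The plan is to mirror the structure of the proof of Lemma~\ref{Lemma_cat1}, adapting it to the setting of internal categories. First I would verify that for an internal category $(E,D,s,t,\sigma,\gm)$ the associated data $\mu=t|_{L}\colon L\to D$ with $L=\Ker s$ genuinely defines a crossed module. Here the action of $D$ on $L$ is induced by $\sigma$ as in Example~\ref{Example_action}(iii): for $x\in D$ and $l\in L$ one sets $x\dashv l=\sigma(x)\dashv l$, $l\dashv x=l\dashv\sigma(x)$, and similarly for $\vdash$, using the products in $E$. Conditions \eqref{cr1} and \eqref{cr2} follow from $s\sigma=t\sigma=\id_{D}$ together with the fact that $s,t$ are dialgebra homomorphisms, exactly as in the cat$^1$ case. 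The genuinely new input is that conditions \eqref{cr3} and \eqref{cr4} must now be extracted from the categorical axioms on $\gm$ rather than from a condition like \eqref{ct2}.

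The core step is therefore to show that the existence and associativity of the composition $\gm$ forces the Peiffer-type identities \eqref{cr3} and \eqref{cr4}. The standard mechanism is to exploit the pullback $E\times_{D}E=\{(e,e')\mid t(e)=s(e')\}$ and the requirement that $\gm$ be a dialgebra homomorphism compatible with $\sigma$, $s$ and $t$. Concretely, for $l,l'\in L=\Ker s$ I would consider the elements $\sigma t(l)+l$ and $l'$ (or the analogous composable pairs) inside $E$, check they are composable under $\gm$, and compute $\gm$ on their products using both that $\gm$ is multiplicative and that $\gm\circ(\sigma\times\id)=\gm\circ(\id\times\sigma)=\id$ on the relevant components (the identity axioms for $\gm$). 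Comparing the two expressions yields the required equalities $\mu(l)\dashv l'=l\dashv l'=l\dashv\mu(l')$ and the $\vdash$ analogue. I expect this to be the main obstacle, since one must choose the composable pairs correctly and invoke exactly the right instances of the unit and associativity axioms for $\gm$; the bookkeeping across four bilinear operations is where errors are likely to creep in.

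Once the crossed module axioms are established, the functoriality part is routine and parallels Lemma~\ref{Lemma_cat1}. For a morphism $(\vp,\psi)$ of internal categories I would set $\Psi(\vp,\psi)=(\vp|_{L},\psi)$, observing that $\vp|_{L}$ is well defined because $\psi s=s'\vp$ forces $\vp(\Ker s)\subseteq\Ker s'$. The condition $\mu'\circ\vp|_{L}=\psi\circ\mu$ follows from $\psi t=t'\vp$, and the compatibility of $\vp|_{L}$ with the action via $\psi$ follows from $\vp\sigma=\sigma'\psi$ together with $\vp$ being a dialgebra homomorphism. Finally, preservation of identities and composites is immediate from the corresponding properties of $\vp$ and $\psi$, so $\Psi$ is a functor. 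I would present the crossed module verification in full and then state the functoriality as a straightforward check, since it is formally identical to the argument already given for $\Phi$.
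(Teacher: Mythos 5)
Your strategy is the same as the paper's: obtain \eqref{cr1} and \eqref{cr2} from $t\sigma=\id_{D}$ and the fact that $t$ is a homomorphism, extract \eqref{cr3} and \eqref{cr4} from the unit laws for $\gm$ combined with $\gm$ being a dialgebra homomorphism on the pullback, and then do the routine functoriality check, which indeed parallels Lemma~\ref{Lemma_cat1}. However, the only concrete choice you commit to in the crucial step is wrong: the pair $(\sigma t(l)+l,\,l')$ is \emph{not} composable, since $t(\sigma t(l)+l)=t(l)+t(l)=2\mu(l)$ while $s(l')=0$, so it does not lie in $E\times_{D}E$ and $\gm$ cannot be evaluated on it. Since you yourself identify the choice of composable pairs as the main obstacle and leave it unresolved (``or the analogous composable pairs''), the core of the proof --- the derivation of \eqref{cr3} and \eqref{cr4} --- is missing as written.

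The choice that works (and is the one used in the paper) is the pair $(l,\sigma\mu(l)+l')$ for $l,l'\in L=\Ker s$; it is composable because $t(l)=\mu(l)=s(\sigma\mu(l)+l')$. Decomposing it as $(l,\sigma t(l))+(\sigma s(l'),l')$ and using linearity of $\gm$ together with the two unit laws gives
\[
\gm\big(l,\sigma\mu(l)+l'\big)=\gm\big(l,\sigma t(l)\big)+\gm\big(\sigma s(l'),l'\big)=l+l'.
\]
Applying $\gm$ to the componentwise product in $E\times_{D}E$ of two such pairs, and using that $\gm$ preserves $\dashv$ and $\vdash$, yields
\[
\gm\big((l_1,\sigma\mu(l_1)+l'_1)\star(l_2,\sigma\mu(l_2)+l'_2)\big)=(l_1+l'_1)\star(l_2+l'_2),
\qquad \star\in\{\dashv,\vdash\},
\]
and specializing the variables (for instance $l_1=0$, $l'_2=0$ gives $l'_1\star\mu(l_2)=l'_1\star l_2$, where $l'_1\star\mu(l_2)$ denotes the action through $\sigma$; the other specializations give the remaining equalities) establishes all of \eqref{cr3} and \eqref{cr4}. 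Note also that only the unit laws and the homomorphism property of $\gm$ are needed; the associativity of $\gm$, which you expected to invoke, plays no role. With this correction your argument coincides with the paper's proof.
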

\begin{proof}
With the above notation, $\mu \colon {L}\to {D}$ is a
crossed module of dialgebras. Indeed, conditions
\eqref{cr1} and \eqref{cr2} are trivially verified, since $t$ is a homomorphism of
dialgebras and $t \sigma (x)=x$ for all $x\in D$. Since $s\sigma = t
\sigma=\id_{D}$ and by the unit law equivalent for the given
internal category, we get that
\begin{align*}
\gm\big(l, \sigma\mu (l)+l'\big)&=\gm\big(l + \sigma s (l'),
\sigma t(l)+l'\big)=\gm\big(l , \sigma t(l)\big)+\gm\big(\sigma s (l'), l'\big)=l+l'
\end{align*}
for all $l,l'\in L=\Ker s$. Therefore, since $\gm$ is a homomorphism of dialgebras,
we have that
\[
\gm\big((l_1, \sigma\mu (l_1)+l'_1)\star (l_2, \sigma\mu (l_2)+l'_2)\big)=(l_1+l'_1)\star (l_2+l'_2),
\]
where $\star$ denotes either $\dashv$ or $\vdash$.
Then, we can easily deduce that all equalities of (cr3) and
(cr4) are satisfied. For instance, if we take
$l_1=0$, $l'_2=0$, we obtain that $l'_1\star \mu (l_2)=l'_1\star l_2$.

Hence, we define $\Psi \colon {\IDi}\to \XDi$ by
\[
\Psi(E,D,s,t,\sigma,\gm)=(\mu \colon L\to D) \quad {\text{and}} \quad \Psi(\vp,\psi)=(\vp|_{L}, \psi),
\]
where $(\vp,\psi) \colon (E,D,s,t,\sigma,\gm)\to (E',D',s',t',\sigma',\gm')$ is a morphism in the
category $\IDi$. It is straightforward to check that  $(\vp|_{L}, \psi) \colon (L\xrightarrow{\mu} D)\to(L'\xrightarrow{\mu'} D')$ is a morphism of crossed modules of dialgebras.
\end{proof}

\begin{theorem}
The categories $\XDi$, $\CDi$ and $\IDi$ are equivalent.
\end{theorem}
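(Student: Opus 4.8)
The plan is to establish two equivalences: first $\CDi \simeq \XDi$ via the functor $\Phi$ from Lemma~\ref{Lemma_cat1}, and then $\IDi \simeq \CDi$ (or $\IDi \simeq \XDi$ directly via $\Psi$ from Lemma~\ref{internal}), after which transitivity yields the full statement. Since $\Phi$ and $\Psi$ are already constructed, the work lies in producing quasi-inverse functors and checking the natural isomorphisms on both composites. For the equivalence $\CDi \simeq \XDi$, I would construct a functor $\Theta \colon \XDi \to \CDi$ sending a crossed module $\mu \colon L \to D$ to the cat$^1$-dialgebra $(L \rtimes D,\ D,\ s,\ t)$, where $D$ is embedded as $\{(0,x)\}$, the source map is $s(l,x) = (0,x)$ (the projection followed by the splitting), and the target map is $t(l,x) = (0, \mu(l)+x)$. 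One checks $s$ is a homomorphism immediately, while $t$ being a homomorphism is exactly the content of \eqref{eq_ho3} together with \eqref{eq_ho1}; conditions \eqref{ct1} and \eqref{ct2} follow from $\Ker s = \{(l,0)\} \cong L$, $\Ker t = \{(-\mu(l')+ \text{stuff})\}$, and the crossed module axioms \eqref{cr3}, \eqref{cr4}.

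\textbf{Checking the composites.} I would verify $\Phi \circ \Theta \cong \id_{\XDi}$ and $\Theta \circ \Phi \cong \id_{\CDi}$. The first is essentially immediate: starting from $\mu \colon L \to D$, forming $(L\rtimes D, D, s, t)$ and then applying $\Phi$ returns $\Ker s = L$ with $t|_{L} = \mu$ and the restricted action, recovering the original crossed module on the nose. The second composite requires more care: given a cat$^1$-dialgebra $(E,D,s,t)$, set $L = \Ker s$; the key step is to produce a natural isomorphism of dialgebras $E \cong L \rtimes D$. This isomorphism is $e \mapsto (e - \sigma s(e),\ s(e))$ where $\sigma$ is the inclusion $D \hookrightarrow E$, with inverse $(l,x) \mapsto l + \sigma(x)$; one must check it respects both products, using \eqref{ct1} to see the decomposition is well-defined and \eqref{ct2} to kill the cross terms $\Ker s \star \sigma(D)$ that would otherwise obstruct matching the semi-direct product formula. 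Naturality in morphisms of cat$^1$-dialgebras is routine once the isomorphism is fixed.

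\textbf{The internal category leg.} For $\IDi \simeq \XDi$, the functor $\Psi$ is given; I would build a quasi-inverse $\Xi \colon \XDi \to \IDi$ sending $\mu \colon L \to D$ to the internal category with $E = L \rtimes D$, source $s(l,x)=(0,x)$, target $t(l,x)=(0,\mu(l)+x)$, identity-assigning $\sigma(x) = (0,x)$, and composition $\gm$ defined on composable pairs by $\gm\big((l,x),(l',\mu(l)+x)\big) = (l+l', x)$. The main obstacle, and the step I expect to absorb most of the effort, is verifying that $\gm$ is a homomorphism of dialgebras satisfying the associativity and unit axioms of an internal category: this is where the interchange between the categorical composition and the two dialgebra products $\dashv, \vdash$ must be reconciled, and it relies precisely on the annihilator condition from Lemma~\ref{Lemma_crossed}(i) (equivalently \eqref{ct2}) to ensure the fibre product $E \times_D E$ inherits a compatible dialgebra structure and that $\gm$ is well-defined and multiplicative. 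Once $\gm$ is established, checking $\Psi \circ \Xi \cong \id$ is immediate ($\Ker s = L$, $t|_L = \mu$), and $\Xi \circ \Psi \cong \id$ reduces to the same $E \cong L \rtimes D$ isomorphism constructed above, now additionally shown to transport the given $\gm$ to the canonical one. I would conclude by invoking transitivity of equivalences.
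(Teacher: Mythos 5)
Your proposal is correct and follows essentially the same route as the paper: the paper's proof likewise produces the quasi-inverses of $\Phi$ and $\Psi$ by sending a crossed module $\mu\colon L\to D$ to the cat$^1$-dialgebra $(L\rtimes D,D,s,t)$ with $s(l,x)=x$, $t(l,x)=\mu(l)+x$, and to the internal category with the same $\sigma(x)=(0,x)$ and $\gm\big((l,x),(l',x+\mu(l))\big)=(l+l',x)$. Two minor misattributions in your justifications (which do not affect the constructions): the isomorphism $E\cong \Ker s\rtimes D$ holds for any split extension without invoking \eqref{ct2}, since the cross terms $\Ker s\star\sigma(D)$ are precisely the action terms of the semi-direct product rather than terms to be killed; and the multiplicativity of $\gm$ rests on the full conditions \eqref{cr3} and \eqref{cr4} (equivalently, \eqref{ct2} for the associated cat$^1$-dialgebra), not merely on the weaker annihilator condition of Lemma~\ref{Lemma_crossed}(i).
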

\begin{proof}
We give constructions of the quasi-inverses for the functors $\Phi$ and $\Psi$, defined in Lemma~\ref{Lemma_cat1} and Lemma~\ref{internal}, as
follows.

Given a crossed module of dialgebras $\mu \colon L\to D$, it can be readily checked that the semi-direct product $L\rtimes D$, together with maps $s,t \colon L\rtimes
D\to D$, given by $s(l,x)=x$ and $t(l,x)=\mu(l)+x$ for all $(l,x) \in L \rtimes D$, is a cat$^1$-dialgebra, and the diagram
\[
\xymatrix@C=0.5cm{
(L\rtimes D)\times_{D} (L\rtimes D)\ar[r]^{\qquad \ \ \ \gm}&
{L}\rtimes D
\ar@<0.6mm>[rr]^{s}\ar@<-0.6mm>[rr]_{t} &&
   D\ar@/_1.3pc/[ll]_{\sigma}
    }
\]
is an internal category in $\Di$ whose `operation' $\gm$ is
given by $\gm\big((l,x),(l',x+\mu (l))\big)=(l+l',x)$ and $\sigma(x)=(0,x)$.
Here observe that any element of $(L\rtimes D)\times_{D} (L\rtimes D)$ is of the form $\big((l,x),(l',x+\mu(l))\big)$.

The assignments $(L\xrightarrow{\mu} D)\to (L\rtimes D,D, s,t)$ and $(L\xrightarrow{\mu} D )\to (L\rtimes D,D, s,t,\sigma,\gm)$ are clearly functorial and
provide the required quasi-inverses for the functors $\Phi$ and $\Psi$, respectively.
\end{proof}

\section{Universal enveloping crossed module of a Leibniz crossed module}\label{S:univ}

In this section we extend the functors $\LB$ and $\Ud$ from diagram~\ref{diagram1} to the categories of crossed modules of dialgebras and Leibniz algebras in such a way that those extensions preserve the adjunction.

\subsection{Leibniz crossed modules} First, we recall some needed notations and facts about crossed modules of Leibniz algebras from \cite{CaKhLa, LoPi}.

Let $\g$ and $\q$ be Leibniz algebras. A \emph{Leibniz action} of $\g$ on $\q$ is a couple of $\k$-linear
maps $\g\otimes \q\to \q , \ \q\otimes \g\to \q$, both denoted again by the symbol $[\;,\;]$, such that $6$ equalities hold. Those equalities are obtained from the Leibniz identity \eqref{Leibniz} by taking one variable in $\g$ and two variables in $\q$ ($3$ equalities), and one variable in $\q$ and two variables in $\g$ ($3$ more equalities).

A Leibniz action of $\g$ on $\q$ enables us to define the \emph{semi-direct product}  $\q\rtimes\g$
as the Leibniz algebra with underlying $\k$-module $\q\oplus\g$ and the Leibniz bracket
\[
[(q,g),(q',g')]=([q,q']+[q,g']+[g,q'], [g,g']), \quad   q,q'\in \q, \;\;  g,g'\in \g.
\]

A \emph{crossed module of Leibniz algebras} (or \emph{Leibniz crossed module}, for short) is a homomorphism of Leibniz algebras $\nu \colon \q\to\g$, together with an action of $\g$ on $\q$ such that
\begin{align*}
&\nu[g,q]=[g,\nu(q)], \qquad \nu[q,g]=[\nu(q),g],\\
&[\nu(q),q']=[q,q']=[q,\nu(q')],
\end{align*}
for all $g\in \g$, $q,q'\in \q$.

A \emph{morphism of Leibniz crossed modules} $(\al, \be) \colon (\q \xrightarrow{\nu}\g)\to (\q'\xrightarrow{\nu'}\g')$
is a pair of Leibniz homomorphisms $\al \colon \q\to \q'$, $\be \colon \g\to \g'$, such that
$\nu'\al=\be\nu$ and $\al[g,q]=[\be(g),\al(q)]$, $\al[q,g]=[\al(q),\be(g)]$ for all $g\in \g$, $q\in \q$.
We denote by $\XLb$ the category of Leibniz crossed modules.

There are several equivalent descriptions of the category $\XLb$. We mention here the equivalences with simplicial Leibniz algebras whose Moore complexes are of length $1$, with internal categories in the category of Leibniz algebras and with $\cat^1$-Leibniz algebras (see e.g. \cite{CaKhLa0} for equivalences in the more general context of Leibniz $n$-algebras). The nature of these equivalences is reminiscent of the interplays between similar objects in groups \cite{Lo2}.
The equivalence between Leibniz crossed modules and $\cat^1$-Leibniz algebras will be used in the sequel and we shall give some of its details immediately below.

A \emph{$\cat^1$-Leibniz algebra} $(\g_1,\g_0,s,t)$ consists of a Leibniz algebra $\g_1$,
together with a Leibniz subalgebra $\g_0$ and structural homomorphisms $s,t \colon \g_1\to \g_0$  satisfying
\[
s|_{\g_0} = t|_{\g_0} = \id_{\g_0} \qquad \text{and} \qquad [\Ker s, \Ker t]= 0 =[\Ker t, \Ker s].
\]

Given a Leibniz crossed module $\q\xrightarrow{\nu}\g$,  the corresponding $\cat^1$-Leibniz algebra is  $(\q\rtimes \g, \g, s, t)$, where
$s(q, g) = g$, $t(q, g) = \nu(q)+g$ for all $(q, g) \in \q\rtimes \g$. On the other hand,
for a  $\cat^1$-Leibniz algebra $(\g_1,\g_0,s,t)$, the corresponding Leibniz crossed module is
$t|_{\Ker s} \colon \Ker s \to \g_0 $, with the action of $\g_0$ on $\Ker s$ given by the Leibniz bracket in $\g_1$.

\subsection{From crossed modules of dialgebras to Leibniz crossed modules}
Now we extend the functor $\LB \colon \Di\to \Lb$ to the categories of crossed modules $\XDi$ and $\XLb$. We need the following:
\begin{lemma}\label{lemmaXDiasToXLb}
An action of a dialgebra $D$ on another dialgebra $L$ allows to define a Leibniz action of $\LB(D)$ on $\LB(L)$ by:
\[
[x,l]=x\dashv l-l\vdash x \quad \text{and} \quad [l,x]=l\dashv x-x\vdash l, \qquad x\in D, \ l\in L.
\]
\end{lemma}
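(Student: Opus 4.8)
The plan is to verify directly that the two bracket maps defined in the statement satisfy the six axioms of a Leibniz action of $\LB(D)$ on $\LB(L)$. Recall that $\LB$ sends a dialgebra to its Leibniz algebra via the bracket $[a,b]=a\dashv b-b\vdash a$ from \eqref{eqDiasToLb}, and that a Leibniz action consists of brackets $\g\otimes\q\to\q$ and $\q\otimes\g\to\q$ obeying the six identities obtained from \eqref{Leibniz} by substituting one variable from $\g=\LB(D)$ and two from $\q=\LB(L)$ (three identities) or two from $\g$ and one from $\q$ (three identities). So the task reduces to a mechanical check: expand each of these six Leibniz identities using the proposed formulas $[x,l]=x\dashv l-l\vdash x$ and $[l,x]=l\dashv x-x\vdash l$, then expand the resulting Leibniz brackets inside $L$ (i.e. $[l,l']=l\dashv l'-l'\vdash l$) and the bracket in $D$, and show each identity follows from the $30$ action equalities, which are precisely the mixed instances of \eqref{eq1}--\eqref{eq5}.

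\medskip

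First I would fix notation for a representative identity, say the Leibniz identity $[[u,v],w]=[u,[v,w]]+[[u,w],v]$ rewritten in the antisymmetrized-on-the-right form matching \eqref{Leibniz}, and systematically work through the cases according to which slots are occupied by $D$-elements versus $L$-elements. For the three identities with two variables $l,l'\in L$ and one variable $x\in D$, each bracket expands into a difference of a $\dashv$-term and a $\vdash$-term, and after full expansion every monomial is an associator-type expression of the form $(\,*\,\star\,*\,)\star\,*$ with exactly one or two entries from $D$; grouping the $\dashv$-monomials separately from the $\vdash$-monomials, the required cancellations are exactly the mixed-variable specializations of \eqref{eq1}--\eqref{eq5}. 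The three identities with two variables in $D$ are handled identically, using instead the mixed instances where $L$ occupies a single slot.

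\medskip

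\textbf{The main obstacle} is purely bookkeeping rather than conceptual: each of the six identities, after substituting the definitions of the two mixed brackets and then the definitions of the internal brackets in $D$ and $L$, produces a sum of roughly eight monomials, and one must correctly match every leftover term against the appropriate one among the $30$ action axioms so that the total telescopes to zero. The delicate point is sign- and order-tracking, since the Leibniz bracket \eqref{eqDiasToLb} is not symmetric and the two products $\dashv,\vdash$ play asymmetric roles in each axiom \eqref{eq1}--\eqref{eq5}; one must be careful that, for instance, the term arising from a $\dashv$-expansion of an outer bracket is cancelled by a term from a $\vdash$-expansion coming from a different axiom. I would organize the verification by collecting, for each identity, the coefficient of each abstract monomial pattern and checking it vanishes; the consistency of the five defining dialgebra axioms under the mixed substitutions guarantees this, so no genuinely new relation is needed. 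Since the computation is routine once the cases are laid out, I would present a single representative case in full and indicate that the remaining five are analogous.
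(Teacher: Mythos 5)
Your proposal is correct and takes essentially the same approach as the paper: the paper also reduces the six Leibniz-action identities to the mixed-variable instances of the dialgebra axioms \eqref{eq1}--\eqref{eq5}, except that instead of carrying out the expansion it simply notes that the calculation of \cite[Proposition 4.2]{Lo}, which shows that the bracket \eqref{eqDiasToLb} satisfies the Leibniz identity, can be repeated verbatim with one variable in $D$ and two in $L$ (and vice versa). Your explicit bookkeeping is just a spelled-out version of that observation, so there is no gap.
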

\begin{proof} The straightforward calculations in the proof of \cite[Proposition 4.2]{Lo}, showing that the bracket \eqref{eqDiasToLb}
satisfies the Leibniz identity, can be repeated if one variable is taken from $D$ (resp. $L$) and two variables are taken from $L$ (resp. $D$).
\end{proof}

It is easy to see that if we apply the functor $\LB \colon \Di\to \Lb$ to a crossed module of dialgebras $\mu \colon L\to D$, we get a Leibniz crossed module
 $\LB(\mu) \colon \LB(L)\to \LB(D)$, where the Leibniz action of $\LB(D)$ on $\LB(L)$ is given as in Lemma~\ref{lemmaXDiasToXLb}.
Moreover, this assignment defines a functor $\XLB \colon \XDi \to \XLb$, which is a natural generalization of the functor $\LB \colon \Di \to \Lb$ in the following sense.

There are full embeddings
\[
\J_0, \J_1 \colon \Di\lra\XDi \qquad (\text{resp.} \quad \J'_0, \J'_1 \colon \Lb\lra\XLb)
\]
defined, for a dialgebra $D$ (resp. for a Leibniz algebra $\g$), by $\J_0(D)=(0\xrightarrow{ \ 0 \ } D)$, $\J_1(D)=(D\xrightarrow{\id_D} D)$
\big(resp. $\J'_0(\g)=(0\xrightarrow{\ 0 \ } \g)$, $\J'_1(\g)=(\g\xrightarrow{\id_{\g}}{\g} )$\big) and
 it is immediate to see that we have the following commutative diagram
\begin{equation}\label{diagramLbDias1}
\xymatrix {
 \Di \ar[d]_{\LB}\ar[r]^{\J_i} & \XDi \ar[d]^{\XLB}  \\
\Lb \ar[r]_{\J'_i} & \XLb
 }
\end{equation}
for $i=0, 1$.
\begin{remark}\label{remark_adj_Di_Lb}
	Let us define three functors from $\XDi$ to $\Di$ (resp. from $\XLb$ to $\Lb$), given by $\Up_0({L}\xrightarrow{\mu} D)=D\slash \mu (L)$, $\Up_1({L}\xrightarrow{\mu} D)=D$
 and $\Up_2({L}\xrightarrow{\mu} D)=L$ (resp. $\Up'_0(\q\xrightarrow{\ \nu \ }{\g} )=\g\slash \nu (\q)$, $\Up'_1(\q\xrightarrow{\ \nu \ }{\g} )=\g$
 and $\Up'_2(\q\xrightarrow{\ \nu \ }{\g} )=\q$). It is straightforward to check that $\Up_i$ is left adjoint to $\J_i$ and $\J_i$ is left adjoint to
  $\Up_{i+1}$ (resp. $\Up'_i$ is left adjoint to $\J'_i$ and $\J'_i$ is left adjoint to $\Up'_{i+1}$), for $i=0,1$.
\end{remark}

\subsection{Universal enveloping crossed module}

In this subsection we construct a left adjoint functor to the functor $\XLB$, which generalizes the universal enveloping dialgebra functor $\Ud \colon \Lb\to\Di$ to crossed modules.

 Let $\nu \colon \q\to \g$ be a Leibniz crossed module and consider its corresponding $\cat^1$-Leibniz algebra
$\xymatrix{ \q \rtimes \g \ar@<0.4ex>[r]^-{s} \ar@<-0.4ex>[r]_-{t} & \g }$. Then, if we apply the universal enveloping dialgebra functor $\Ud$, we obtain a diagram of dialgebras
\begin{equation}\label{diagramNoCat}
\xymatrix{ \Ud(\q \rtimes \g) \ar@<0.4ex>[r]^-{\Ud(s)}
\ar@<-0.4mm>[r]_-{\Ud(t)} & \Ud(\g)}.
\end{equation}

In spite of the fact that $\Ud(s)|_{\Ud(\g)}=\Ud(t)|_{\Ud(\g)}=\id_{\Ud(\g)}$, \eqref{diagramNoCat} is not a $\cat^1$-dialgebra, since condition \eqref{ct2}
in the definition of a $\cat^1$-dialgebra is not fulfilled in general. For this reason, we consider the following diagram,
\begin{equation}\label{diagramYesCat}
\xymatrix{ {\Ud(\q \rtimes \g)}/{X} \ar@<0.4ex>[r]^-{{\ol\Ud(s)}}
\ar@<-0.4mm>[r]_-{{\ol\Ud(t)}} & \Ud(\g)
 },
\end{equation}
where
\begin{align}\label{eqX}
\nonumber X=\Ker\Ud(s)\!\dashv\! \Ker\Ud(t)+ \Ker\Ud(t)\!\dashv\! \Ker\Ud(s)&+\Ker\Ud(s)\!\vdash\! \Ker\Ud(t)\\
&+ \Ker\Ud(t)\!\vdash\! \Ker\Ud(s),
\end{align}
with $\ol\Ud(s)$ and  $\ol\Ud(t)$ induced by $\Ud(s)$ and $\Ud(t)$, respectively.

 Obviously, \eqref{diagramYesCat} is a  $\cat^1$-dialgebra and we define $\XUd(\q \xrightarrow {\ \nu \ }\g)$ to be the crossed module of dialgebras corresponding to this $\cat^1$-dialgebra, that is,
\[
\XUd(\q\xrightarrow {\ \nu \ }\g)=\big(\ol\Ud(t)|_{\Ker\ol\Ud(s)} \colon \Ker\ol\Ud(s) \lra \Ud(\g) \big).
\]
\begin{definition}
Given a Leibniz crossed module $(\q\xrightarrow {\ \nu \ }\g)$, the crossed module of dialgebras $\XUd(\q\xrightarrow {\ \nu \ }\g)$ is called
the universal enveloping crossed module of $(\q\xrightarrow {\ \nu \ }\g)$.
\end{definition}

It is easy, and we leave it to the reader, to check that the universal enveloping crossed module construction for a Leibniz crossed module provides a functor $\XUd \colon \XLb\to\XDi$, which is a natural generalization of the functor $\Ud\colon \Lb\to\Di$ in the sense of the following proposition.
\begin{proposition} The following diagram of categories and functors
\begin{equation}\label{diagramLbDias2}
\xymatrix {
 \Lb \ar[d]_{\Ud}\ar[r]^{\J'_i} & \XLb \ar[d]^{\XUd}  \\
\Di \ar[r]_{\J_i} & \XDi
 }
\end{equation}
is commutative for $i=0,1$.
\end{proposition}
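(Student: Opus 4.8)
The plan is to unwind both composites in \eqref{diagramLbDias2} directly, following the recipe that defines $\XUd$: a Leibniz crossed module is first replaced by its $\cat^1$-Leibniz algebra, then $\Ud$ is applied, the result is forced to satisfy \eqref{ct2} by passing to the quotient modulo $X$ (as in \eqref{eqX}), and finally the associated crossed module of dialgebras is extracted. Since the inputs $\J'_0(\g)=(0\to\g)$ and $\J'_1(\g)=(\g\xrightarrow{\id}\g)$ behave differently, I would treat $i=0$ and $i=1$ separately, exhibiting in each case a natural isomorphism $\XUd\J'_i(\g)\cong\J_i\Ud(\g)$.

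The case $i=0$ is immediate. The $\cat^1$-Leibniz algebra attached to $0\to\g$ is $(0\rtimes\g,\g,s,t)=(\g,\g,\id_\g,\id_\g)$, so applying $\Ud$ produces $\Ud(\g)$ with $\Ud(s)=\Ud(t)=\id_{\Ud(\g)}$. Both kernels vanish, whence $X=0$ and the resulting $\cat^1$-dialgebra is $(\Ud(\g),\Ud(\g),\id,\id)$; its associated crossed module has source $\Ker\id=0$ and is therefore $0\xrightarrow{0}\Ud(\g)=\J_0\Ud(\g)$, as required.

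For $i=1$ I would build an explicit comparison morphism. The $\cat^1$-Leibniz algebra of $\g\xrightarrow{\id}\g$ is $(\g\rtimes\g,\g,s,t)$ with $s(q,g)=g$ and $t(q,g)=q+g$, while the $\cat^1$-dialgebra that corresponds, under the equivalence $\CDi\simeq\XDi$, to $\J_1\Ud(\g)=(\Ud(\g)\xrightarrow{\id}\Ud(\g))$ is $(\Ud(\g)\rtimes\Ud(\g),\Ud(\g),s',t')$ with $s'(l,x)=x$ and $t'(l,x)=l+x$. Writing $\eta\colon\g\to\LB\Ud(\g)$ for the unit of $\Ud\dashv\LB$, a short check using that $\eta$ is a Leibniz homomorphism together with the bracket \eqref{eqDiasToLb} in the dialgebra $\Ud(\g)\rtimes\Ud(\g)$ shows that $(q,g)\mapsto(\eta(q),\eta(g))$ is a Leibniz homomorphism $\g\rtimes\g\to\LB(\Ud(\g)\rtimes\Ud(\g))$; by the universal property of $\Ud$ it induces a dialgebra homomorphism $F\colon\Ud(\g\rtimes\g)\to\Ud(\g)\rtimes\Ud(\g)$. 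Comparing both sides again through the universal property gives $s'F=\Ud(s)$ and $t'F=\Ud(t)$, so $F$ carries $\Ker\Ud(s)$ into $\Ker s'$ and $\Ker\Ud(t)$ into $\Ker t'$; as the target already satisfies \eqref{ct2}, $F$ annihilates each summand of $X$ in \eqref{eqX}, hence descends to a morphism of $\cat^1$-dialgebras $\ol F\colon\Ud(\g\rtimes\g)/X\to\Ud(\g)\rtimes\Ud(\g)$. Applying the functor $\Phi\colon\CDi\to\XDi$ then yields a morphism $\XUd(\g\xrightarrow{\id}\g)\to\J_1\Ud(\g)$.

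The hard part is proving that $\ol F$ is an isomorphism; the rest is bookkeeping, and naturality in $\g$ is routine once the maps are in place. I would construct an inverse from the two canonical Leibniz homomorphisms $\g\to\LB(\Ud(\g\rtimes\g)/X)$, namely $q\mapsto\ol{\eta_{\g\rtimes\g}(q,0)}$ (the inclusion of $\Ker s$) and $g\mapsto\ol{\eta_{\g\rtimes\g}(0,g)}$ (the section), which induce two dialgebra maps out of $\Ud(\g)$; the delicate point is to assemble these into a single homomorphism $G\colon\Ud(\g)\rtimes\Ud(\g)\to\Ud(\g\rtimes\g)/X$ out of the semi-direct product, and it is precisely the relations imposed by $X$ that make $G$ well defined and multiplicative, so the quotient is not an artefact but the crux. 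One then checks $G\ol F=\id$ and $\ol FG=\id$ on the dialgebra generators $(\eta q,0)$ and $(0,\eta g)$; alternatively, a PBW-type basis for $\Ud$ in the spirit of \cite[Theorem 2.5]{Lo} identifies $\ol F$ as a module isomorphism directly. I note finally that once the adjunction $\XUd\dashv\XLB$ is available, a one-line Yoneda argument bypasses all of this: for every $(L\xrightarrow{\mu}D)$ in $\XDi$ the adjunctions $\XUd\dashv\XLB$ and $\J'_i\dashv\Up'_{i+1}$ give $\Hom_{\XDi}(\XUd\J'_i(\g),L\to D)\cong\Hom_{\Lb}(\g,\Up'_{i+1}\XLB(L\to D))$, while $\J_i\dashv\Up_{i+1}$ and $\Ud\dashv\LB$ give $\Hom_{\XDi}(\J_i\Ud(\g),L\to D)\cong\Hom_{\Lb}(\g,\LB\Up_{i+1}(L\to D))$, and these coincide because $\Up'_{i+1}\XLB(L\to D)=\LB\Up_{i+1}(L\to D)$ (both equal $\LB(D)$ for $i=0$ and $\LB(L)$ for $i=1$); Yoneda then yields $\XUd\J'_i(\g)\cong\J_i\Ud(\g)$ naturally.
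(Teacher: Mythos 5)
You prove the right statement, and for $i=1$ your route is genuinely different from the paper's. The paper never leaves the single cat$^1$-dialgebra $\big(\Ud(\g\rtimes \g)/X,\Ud(\g),\ol\Ud(s),\ol\Ud(t)\big)$: it shows that the structural map $\ol\Ud(t)|_{\Ker \ol\Ud(s)}\colon \Ker\ol\Ud(s)\to\Ud(\g)$ is itself an isomorphism with inverse $\pi\Ud(\epsilon)$ (where $\epsilon(g)=(g,0)$), the nontrivial direction being an explicit tensor computation — $\Ker\Ud(s)$ is spanned by tensors having an entry of the form $(g_i,0)$, and entries $(g_j,g'_j)$ are replaced by $(g_j+g'_j,0)$ one at a time modulo $X$ — after which the crossed-module isomorphism is the pair $(\ol\Ud(t)|_{\Ker \ol\Ud(s)},\id_{\Ud(\g)})$. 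You instead compare two cat$^1$-dialgebras, $\Ud(\g\rtimes\g)/X$ and $\Ud(\g)\rtimes\Ud(\g)$, building both $\ol F$ and its inverse $G$ from universal properties; this avoids any explicit description of $\Ker\Ud(s)$ and makes naturality in $\g$ transparent. Your ``delicate point'' (that $G$ is multiplicative) is indeed the crux, and as written it is only asserted; it does go through, for exactly the reason you name, and the verification is short enough that you should include it: with $\epsilon(g)=(g,0)$ and $\sigma_0(g)=(0,g)$ one has $\Im \Ud(\epsilon)\subseteq \Ker\Ud(s)$ and $\Im\big(\Ud(\epsilon)-\Ud(\sigma_0)\big)\subseteq\Ker\Ud(t)$, hence for $\star\in\{\dashv,\vdash\}$
\[
\Ud(\sigma_0)(x)\star\Ud(\epsilon)(l)-\Ud(\epsilon)(x\star l)
=-\big(\Ud(\epsilon)(x)-\Ud(\sigma_0)(x)\big)\star\Ud(\epsilon)(l)\in \Ker\Ud(t)\star\Ker\Ud(s)\subseteq X,
\]
and symmetrically $\Ud(\epsilon)(l)\star\Ud(\sigma_0)(x)-\Ud(\epsilon)(l\star x)\in\Ker\Ud(s)\star\Ker\Ud(t)\subseteq X$, so $G$ is a homomorphism and the identities $G\ol F=\id$, $\ol F G=\id$ follow on generators. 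Your closing Yoneda argument is a complete and very clean alternative, and it is not circular — the paper's proof of the adjunction $\XUd\dashv\XLB$ nowhere uses this proposition — but note that it inverts the paper's order of exposition and additionally leans on the adjunctions $\J_i\dashv\Up_{i+1}$ and $\J'_i\dashv\Up'_{i+1}$ of Remark~\ref{remark_adj_Di_Lb}, which the paper only asserts as straightforward. In short, the paper's computation buys self-containedness at this point of the text, while your approach buys structure: the isomorphism and its inverse come from universal properties, at the cost of one small verification (supplied above) or an appeal to results established later.
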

\begin{proof}
For $i=0$, the equality $\J_0\circ \Ud =  \XUd\circ \J'_0$ is trivially verified. Now we show that there is a natural isomorphism of functors
$
\J_1\circ \Ud \cong  \XUd\circ \J'_1.
$

Given a Leibniz algebra $\g$, we have that $(\J_1\circ \Ud) (\g)=(\id_{\Ud(\g)} \colon \Ud(\g)\to\Ud(\g))$.
Thus, we need to show that the crossed module of dialgebras $\XUd(\id_{\g} \colon \g \to \g ) $ is isomorphic to $(\id_{\Ud(\g)} \colon \Ud(\g)\to\Ud(\g))$.

For the Leibniz crossed module $\id_{\g} \colon \g\to\g$, the corresponding $\cat^1$-Leibniz algebra is $(\g\rtimes \g, \g,s,t)$, with $s(g,g')=g'$, $t(g,g')=g+g'$, and there is also a homomorphism $\epsilon \colon \g\to \g\rtimes \g$, $g\mapsto (g,0)$, satisfying that $s \epsilon=0$ and $t\epsilon=\id_{\g}$.
Next, we need to consider the $\cat^1$-dialgebra $\big(\Ud(\g\rtimes \g)/X,\Ud(\g),\ol\Ud(s),\ol\Ud(t)\big)$, where $X$ is the same as in \eqref{eqX}.

Let $\pi \colon \Ud(\g\rtimes \g)\to \Ud(\g\rtimes \g)/X$ denote the canonical epimorphism. Then, we have the equalities
$\ol\Ud(s)\pi\Ud(\epsilon)=\Ud(s)\Ud(\epsilon)=0$ and $\ol\Ud(t)\pi\Ud(\epsilon)=\Ud(t)\Ud(\epsilon)=\id_{\g}$.
This means that $\pi\Ud(\epsilon)$ takes values in $\Ker \ol\Ud(s)$ and
it is a right inverse for the homomorphism $\ol\Ud(t)|_{\Ker \ol\Ud(s)} \colon \Ker \ol\Ud(s)\to \Ud(\g)$. We claim that it is a left inverse as well.
Indeed, we have that $\Ker \ol\Ud(s)= \Ker\Ud(s)/X$ and, as a $\k$-module, $\Ker \Ud(s)$ is generated by all elements of the form
\begin{equation}\label{eq-element}
(g_{-n},g'_{-n})\otimes \dots \otimes (g_i,0)\otimes\dots\otimes (g_m,g'_m),
\end{equation}
$m,n\geq 0$, $-n\leq i\leq m$.
The value of $\Ud(\epsilon)\Ud(t)$ on \eqref{eq-element} is
\begin{equation}\label{eq-element1}
(g_{-n}+g'_{-n},0)\otimes \dots \otimes (g_i,0)\otimes\dots\otimes (g_m+g'_m,0).
\end{equation}
%Since the elements of the form $(0,0)\otimes \dots \otimes (g,-g)\otimes\dots\otimes (0,0)$ belong to $X$,
Then, we easily derive the following equalities in $\Ker\Ud(s)/X$:
\begin{align*}
&(g_{-n}+g'_{-n},0)\otimes \dots \otimes (g_i,0)\otimes\dots\otimes (g_m+g'_m,0)\\
=&(g_{-n},g'_{-n})\otimes \dots \otimes (g_i,0)\otimes\dots\otimes (g_m+g'_m,0)\\
=& \cdots \\
=& (g_{-n},g'_{-n})\otimes \dots \otimes (g_i,0)\otimes\dots\otimes (g_m,g'_m).
\end{align*}
Hence, the elements \eqref{eq-element} and \eqref{eq-element1} are equal in $\Ker\Ud(s)/X$ and it follows that  \[\pi\Ud(\epsilon)\ol\Ud(t)|_{\Ker \ol\Ud(s)}=\id_{\Ker \ol\Ud(s)}.\]

Now it is easy to see that the pair $(\ol\Ud(t)\!\!\mid_{\Ker \ol\Ud(s)}, \id_{\Ud(\g)})$ is an isomorphism  between the crossed modules of dialgebras
$(\ol\Ud(t)|_{\Ker \ol\Ud(s)} \colon \Ker\ol\Ud(s)\to\Ud(\g))$  and $(\id_{\Ud(\g)} \colon \Ud(\g)\to\Ud(\g))$, which provides the required isomorphism between the functors  $\J_1\circ \Ud$ and $\XUd\circ \J'_1$.
\end{proof}

\subsection{Adjunction between $\XLb$ and $\XDi$}

The following result extends to the categories of crossed modules the adjunction between the categories $\Lb$ and $\Di$ obtained in \cite{Lo}.

\begin{theorem}
The functor $\XUd$  is left adjoint to the functor $\XLB$.
\end{theorem}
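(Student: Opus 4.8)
The plan is to exhibit, for a Leibniz crossed module $\q \xrightarrow{\nu} \g$ and a crossed module of dialgebras $L \xrightarrow{\mu} D$, a bijection
\[
\Hom_{\XDi}\bigl(\XUd(\q \xrightarrow{\nu} \g),\, L \xrightarrow{\mu} D\bigr)
\;\cong\;
\Hom_{\XLb}\bigl(\q \xrightarrow{\nu} \g,\, \XLB(L \xrightarrow{\mu} D)\bigr),
\]
natural in both variables, and to reduce it to the base adjunction $\Ud \dashv \LB$ of diagram~\ref{diagram1} through the $\cat^1$-descriptions of both categories. First I would translate each side into the $\cat^1$-picture. Using the equivalence $\CDi \simeq \XDi$, the left-hand set becomes $\Hom_{\CDi}$ out of the $\cat^1$-dialgebra $(\Ud(\q\rtimes\g)/X,\, \Ud(\g),\, \ol\Ud(s),\, \ol\Ud(t))$ --- which is, by the very definition of $\XUd$, the $\cat^1$-dialgebra attached to $\XUd(\q \xrightarrow{\nu}\g)$ --- into the $\cat^1$-dialgebra $(L\rtimes D,\, D,\, s_D,\, t_D)$ of $L \xrightarrow{\mu} D$, where $s_D(l,x)=x$ and $t_D(l,x)=\mu(l)+x$. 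Using the analogous equivalence between $\XLb$ and $\cat^1$-Leibniz algebras, the right-hand set becomes morphisms of $\cat^1$-Leibniz algebras out of $(\q\rtimes\g,\, \g,\, s,\, t)$ into the $\cat^1$-Leibniz algebra of $\XLB(L \xrightarrow{\mu} D)$. A short preliminary computation I would record is that $\LB$ carries the dialgebra semidirect product to the Leibniz one, $\LB(L\rtimes D) \cong \LB(L)\rtimes\LB(D)$ (the identity on underlying $\k$-modules), the Leibniz action being that of Lemma~\ref{lemmaXDiasToXLb}; this identifies the target $\cat^1$-Leibniz algebra with $(\LB(L\rtimes D),\, \LB(D),\, \LB(s_D),\, \LB(t_D))$.

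It is convenient to record a morphism of $\cat^1$-objects by viewing each structural map as the idempotent endomorphism obtained by post-composing with the inclusion of the base; a morphism is then simply a homomorphism commuting with both idempotents. Write $e_s,e_t$ for the idempotents on $\q\rtimes\g$ and $e_{s_D},e_{t_D}$ for those on $L\rtimes D$. The first step then treats the quotient by $X$. Precomposition with the projection $\pi\colon \Ud(\q\rtimes\g)\to\Ud(\q\rtimes\g)/X$ identifies $\cat^1$-dialgebra morphisms out of the quotient with dialgebra homomorphisms $\tilde f\colon \Ud(\q\rtimes\g)\to L\rtimes D$ satisfying $\tilde f\,\Ud(e_s)=e_{s_D}\,\tilde f$ and $\tilde f\,\Ud(e_t)=e_{t_D}\,\tilde f$. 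Here the quotient loses nothing: since $s\,\iota=t\,\iota=\id_\g$ for the section $\iota\colon\g\to\q\rtimes\g$, $\iota(g)=(0,g)$, the map $\Ud(\iota)$ is a split monomorphism, so $\Ker\Ud(s)=\Ker\Ud(e_s)$, whence the commutation relations give $\tilde f(\Ker\Ud(s))\subseteq\Ker s_D$ and $\tilde f(\Ker\Ud(t))\subseteq\Ker t_D$; applying $\tilde f$ to the four generating products of $X$ in \eqref{eqX} and invoking condition~\eqref{ct2} in the target $\cat^1$-dialgebra $L\rtimes D$ forces $\tilde f(X)=0$, so every such $\tilde f$ factors through $\pi$. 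This is precisely the role for which $X$ was designed.

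The second step applies the base adjunction. By $\Ud\dashv\LB$ the homomorphisms $\tilde f\colon \Ud(\q\rtimes\g)\to L\rtimes D$ correspond bijectively, via $g=\LB(\tilde f)\,\eta_{\q\rtimes\g}$ with unit $\eta$, to Leibniz homomorphisms $g\colon \q\rtimes\g\to\LB(L\rtimes D)$. I would then check that the two idempotent-commutation relations on $\tilde f$ transport exactly to the $\cat^1$-Leibniz conditions $g\,e_s=e_{s_D}\,g$ and $g\,e_t=e_{t_D}\,g$ on $g$. Indeed, writing $\Theta$ for the adjunction bijection and using naturality of $\eta$ together with $\LB(e_{s_D})=e_{s_D}$ on underlying modules, one has $\Theta(\tilde f\,\Ud(e_s))=g\,e_s$ and $\Theta(e_{s_D}\,\tilde f)=e_{s_D}\,g$, and likewise for $t$; since $\Theta$ is a bijection the relations match. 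Composing these bijections yields the asserted isomorphism of $\Hom$-sets, and its naturality in both variables follows from naturality of $\Theta$ and functoriality of $\Ud$, $\LB$, $\XUd$, $\XLB$ and of the two $\cat^1$-equivalences.

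The step I expect to be the main obstacle is the verification in the third paragraph --- that structural $s,t$-compatibility on the dialgebra side corresponds, under $\Ud\dashv\LB$, to $\cat^1$-Leibniz compatibility on the Leibniz side --- since a naive approach would require matching the $\cat^1$ conditions variable by variable. The idempotent reformulation combined with naturality of the unit is what collapses this into the two clean identities above. The only genuinely new verifications, the semidirect-product compatibility $\LB(L\rtimes D)\cong\LB(L)\rtimes\LB(D)$ and the automatic vanishing $\tilde f(X)=0$, are both short and fit directly into the scheme.
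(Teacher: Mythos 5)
Your proof is correct and follows essentially the same route as the paper's: both pass to the $\cat^1$-descriptions of the two crossed modules, apply the base adjunction $\Ud\dashv\LB$, and use condition (ct2) in the target $\cat^1$-dialgebra $L\rtimes D$ to show that the relevant homomorphisms automatically kill $X$, so that factoring through $\Ud(\q\rtimes\g)/X$ loses nothing. The difference is presentational rather than substantive: the paper constructs the correspondence in one direction and declares the inverse ``obvious,'' whereas your chain of bijections---via the idempotent reformulation of $\cat^1$-morphisms and the identification $\LB(L\rtimes D)\cong\LB(L)\rtimes\LB(D)$, both of which the paper uses implicitly---makes the two-sided correspondence and its naturality explicit.
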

\begin{proof}
We must construct a natural bijection
\[
\Hom_{\XLb}\big((\g\xrightarrow {\ \nu \ }\q), \XLB(L\xrightarrow {\ \mu \ } D)\big)\cong
\Hom_{\XDi}\big(\XUd(\g\xrightarrow {\ \nu \ }\q), (L\xrightarrow {\ \mu \ } D)\big)
\,,
\]
for any $(\g\xrightarrow {\ \nu \ }\q)\in \XLb$ and $(L\xrightarrow {\ \mu \ } D)\in \XDi$.

Given a morphism $(\al,\be) \colon \big((\g\xrightarrow {\ \nu \ }\q) \to  \XLB(L\xrightarrow {\ \mu \ } D)\big)$ of Leibniz crossed modules, consider the corresponding morphism of $\cat^1$-Leibniz algebras
\[
\xymatrix{ \q \rtimes \g \ar[d]_{\al'} \ar@<0.4mm>[r]^-{s}
\ar@<-0.4ex>[r]_-{t} &\  \g    \ar[d]^{\beta}\;
\\
\LB(L \rtimes D) \ar@<0.4ex>[r]^-{\sigma} \ar@<-0.4ex>[r]_-{\tau}
& \LB(D) \ ,
}
\]
where $\al'(q,g)=(\al(q),\be(g))$, for $(q,g)\in \q\rtimes \g$.
Since the functor $\Ud$ is left adjoint to the functor $\LB$, we get the induced commutative diagram of dialgebras
\[
\xymatrix{ \Ud(\q \rtimes \g) \ar[d]_{{\al'}^*} \ar[r]^-{\Ud(s)}
 & \Ud(\g) \ar[d]^{\beta^*}
\\
L \rtimes D \ar[r]^-{\sigma}  & D \;.
 }
\]
Besides, the similar diagram obtained by replacing $\Ud(s)$ by $\Ud(t)$ and $\sigma$ by $\tau$, is also commutative.
Since  $\Ker \sigma\dashv\Ker \tau=\Ker \tau\dashv\Ker\sigma=\Ker \sigma\vdash\Ker \tau=\Ker \tau\vdash\Ker\sigma=0$, we have a uniquely defined morphism of $\cat^1$-dialgebras
\[
\xymatrix{\Ud(\q\rtimes \g )/X \ar[d]_{\ol{{\al'}^*}} \ar@<0.4ex>[r]^-{{\ol\Ud(s)}} \ar@<-0.4ex>[r]_-{{\ol\Ud(t)}} & \Ud(\g) \ar[d]^{\be^*}\;
\\
L \rtimes D \ar@<0.4ex>[r]^-{\sigma} \ar@<-0.4ex>[r]_-{\tau} & D
 }
\]
(here $X$ is the same as in \eqref{eqX}), which corresponds to a uniquely defined morphism from
$\Hom_{\XDi}\big(\XUd(\g\xrightarrow {\ \nu \ }\q), (L\xrightarrow {\ \mu \ } D)\big)$. The inverse assignment is obvious.
\end{proof}

\section{Relations between crossed modules of Lie, Leibniz, associative algebras and dialgebras}\label{S:rel}

In this section we extend the commutative diagram \eqref{diagram1} to the diagram of respective categories of crossed modules.

\subsection{Crossed modules of associative and Lie algebras}

We begin by recalling some basic notions about crossed modules of associative and Lie algebras from \cite{CaCaKhLa} (see also \cite{DoInKhLa, El1, KaLo, Lu}).

A \emph{crossed module of associative algebras} is an algebra homomorphism $\rho \colon R\to A$ together with an action of $A$ on
$R$ such that the following conditions hold:
\begin{align*}
&\rho(a r) = a \rho(r), \qquad  \quad \rho(r a)=\rho(r) a,\\
&\rho(r) r' = r r' = r \rho(r'),
\end{align*}
for all $a\in A$, $r, r' \in R$. Here, by an \emph{action} of $A$ on $R$ we mean an $A$-bimodule structure on $R$ satisfying the following conditions:
\[
a (r r')=(a r) r',\qquad (r a) r'= r (a r'), \qquad (r r')  a = r (r'  a),
\]
for all $a\in A$, $r\in R$.

A \emph{morphism of crossed modules of associative algebras}  $(\al,\be)\colon (R,A,\rho)\to(R',A',\rho')$  is a pair of algebra homomorphisms $(\al \colon R\to R', \; \be \colon A\to A')$,
 such that $\rho'\al=\be\rho$, $\al(a r)=\be(a)\al(r)$ and $\al(r a)=\al(r) \be(a)$, for all $a\in A$, $r\in R$. We denote the category of crossed modules of associative algebras by $\XAs$.

A \emph{Lie crossed module} is a Lie homomorphism $\pa \colon M\to P$, together with a Lie action of $P$ on $M$, such that, for all $m, m'\in M$ and $p \in P$,
\[
\pa[p,m]=[p,\pa(m)],\qquad
[\pa(m),{m'}]=[m,m'].
\]
Here,  by a  \emph{Lie action} of $P$ on $M$ we mean a $\k$-linear map $P\otimes M\to M$, $(p,m)\mapsto [p,m]$ satisfying
\[
[[p,p'],m]= [p,[p',m]]-[p',[p,m]],\qquad
[p,[m,m']]= [[p,m],m']+[m,[p,m']].
\]
 A \emph{morphism of Lie crossed modules} $(\al, \be) \colon (M\xrightarrow {\ \pa \ }P)\to (M'\xrightarrow {\ \pa' \ }P')$
is a pair of Lie homomorphisms $\al \colon M\to M'$, $\be \colon P\to P'$, such that $\pa'\al=\be\pa$ and $\al[p,m]=[\be(p),\al(m)]$, for all $m\in M$, $p\in P$. We denote by $\XLie$ the category of Lie crossed modules.

There are full embeddings
\[
\I_0, \I_1 \colon \As\lra\XAs  \qquad (\text{resp.}\  \I'_0, \I'_1 \colon \Lie\lra\XLie)
\]
defined, for an associative algebra $A$  (resp. for a Lie algebra $P$), by $\I_0(A)=(0\xrightarrow {\ 0 \ } A)$, $\I_1(A)=(A\xrightarrow{\id_A}A)$
(resp.  $\I'_0(P)=(0\xrightarrow{\ 0 \ } P)$, $\I'_1(P)=(P\xrightarrow{\id_P} P)$).
\begin{remark}\label{remark_adj_As_Lie}
	Analogously to the situation described in Remark~\ref{remark_adj_Di_Lb}, we can define three functors from $\XAs$ to $\As$ (resp. from $\XLie$ to $\Lie$),
 given by $\Ga_0({R}\xrightarrow{\rho} A)=A\slash \rho (R)$, $\Ga_1({R}\xrightarrow{\rho} A)=A$ and $\Ga_2({R} \xrightarrow{\rho} A)=R$
 (resp. $\Ga'_0(M\xrightarrow{\ \pa \ }P)=P\slash \pa (M)$, $\Ga'_1(M\xrightarrow{\ \pa \ }P)=P$ and $\Ga'_2(M\xrightarrow{\ \pa \ }P)=M$).
  It is straightforward to check that $\Ga_i$ is left adjoint to $\I_i$ and $\I_i$ is left adjoint to $\Ga_{i+1}$
   (resp. $\Ga'_i$ is left adjoint to $\I'_i$ and $\I'_i$ is left adjoint to $\Ga'_{i+1}$), for $i=0,1$.
\end{remark}
\

In \cite{CaCaKhLa}, a pair of adjoint functors between the categories $\XAs$ and $\XLie$ is constructed,
which extends the classical adjunction
$
\xymatrix  {
\As \ar@<-1.1ex>[r]^{\perp}_{\Liea} & \Lie \ar@<-1.1ex>[l]_{\U}.
}
$
Namely, the functor $\XLiea \colon \XAs\to\XLie$ and its left adjoint $\XU \colon \XLie \to \XAs$ are constructed in such a way that, in the following diagram
\begin{equation}\label{diagramAsLie}
\xymatrix @=20mm {
	\As \ar@<1.1ex>[d]_{\I_i} \ar@<-1.1ex>[r]^{\perp}_{\Liea} & \Lie \ar@<-1.1ex>[l]_{\U} \ar[d]^{\I'_i} \\
	\XAs  \ar@<1.1ex>[r]_{\top}^{\XLiea} & \XLie \ar@<1.1ex>[l]^{\XU}
}
\end{equation}
both inner and outer diagrams are commutative  for $i=0, 1$.

\subsection{$\XAs$ vs $\XDi$}
Let $A$ and $R$ be associative algebras. Then, any action of $A$ on $R$, as associative algebras, defines an action of $A$ on $R$, as dialgebras,
 by letting $a\dashv r = a\vdash r= ar$ and $r \dashv a = r\vdash a= ra$,  for all $a\in A$, $r\in R$. Moreover, any crossed module of associative algebras is,
  at the same time, a crossed module of dialgebras and we have an inclusion functor $\XAs\hookrightarrow \XDi$.

On the other hand, given a crossed module of dialgebras $\mu \colon L\to D$, we get a crossed module of associative algebras $\overline{\AS}(L)\to \AS(D)$,
 where $\overline{\AS}(L)$ is the quotient of $L$ by the ideal generated by all elements of the form $l \dashv l' −- l \vdash l'$,
  $x \dashv l -− x \vdash l$ and $l \dashv x −- l \vdash x$, $l,l'\in L$, $x\in X$, and  the action of the associative algebra $\AS(D)$ on $\overline{\AS}(L)$ is given by
\[
\overline{x}\;\overline{l}=\overline{x\dashv l} \ (=\overline{x\vdash l}) \qquad \text{and} \qquad \overline{l}\;\overline{x}=\overline{l\dashv x} \ (=\overline{l\vdash x}),
\]
where $\overline{x}$ (resp. $\overline{l}$) is an element of $\AS(D)$ (resp. $\overline{\AS}(L)$) represented by $x\in D$ (resp. $l\in L$).
Moreover, the pair of canonical projections $(\pi_1, \pi_2) \colon (L\xrightarrow{\ \mu \ }D )\twoheadrightarrow (\overline{\AS}(L){\lra}\AS(D)) $
 is a morphism of crossed modules of dialgebras, and it is universal among all morphisms from $(L \xrightarrow {\ \mu \ }D )$ to a crossed module of associative algebras.
Thus, this construction defines a functor $\XAS \colon \XDi\to\XAs$, which is left adjoint to the inclusion functor,
  and it  is a natural extension of the functor $\AS \colon \Di\to\As$ in the sense that the following inner and outer diagrams
\begin{equation}\label{diagramAsDias}
\xymatrix @=20mm {
	\As \ar@<-1.1ex>[r]^{\perp}_{\subset} \ar[d]_{\I_i} & \Di  \ar@<-1.1ex>[l]_{\AS} \ar[d]^{\J_i} \\
	\XAs \ar@<1.1ex>[r]_{\top}^{\subset}  & \XDi  \ar@<1.1ex>[l]^{\XAS}
}
\end{equation}

are commutative for $i=0,1$.

\subsection{$\XLie$ vs $\XLb$ }

Let $P$ and $M$ be Lie algebras. Any Lie action of $P$ on $M$ defines in a natural way a Leibniz action of $P$ on $M$, and any Lie crossed module $\pa \colon M\to P$ is,
 at the same time, a crossed module of Leibniz algebras (see \cite[Remark 11]{CaKhLa}). Thus, we have the inclusion functor
$\XLie\hookrightarrow\XLb$.

Conversely, given a Leibniz crossed module $\nu \colon \g\to\q$, we get a Lie crossed module  $\overline{\Liel}(\g)\to\Liel(\q)$,
 where $\overline{\Liel}(\g)$ is the quotient of $\g$ by the ideal generated by all elements $[g,g]$ and $[q,g]+[g,q]$, $g\in \g$, $q\in q$,
  together with the induced Lie action of $\Liel(\q)$ on $\overline{\Liel}(\g)$. This construction defines a functor $\XLiel \colon \XLb\to\XLie$,
   which is left adjoint to the inclusion functor,  and it  is a natural extension of the functor $\Liel \colon \Lb\to\Lie$ in the sense that the following inner and outer diagrams

\begin{equation}\label{diagramLieLb}
\xymatrix @=20mm {
	\Lie \ar@<-1.1ex>[r]^{\perp}_{\subset} \ar[d]_{\I'_i} & \Lb  \ar@<-1.1ex>[l]_{\Liel} \ar[d]^{\J'_i} \\
	\XLie \ar@<1.1ex>[r]_{\top}^{\subset}  & \XLb  \ar@<1.1ex>[l]^{\XLiel}
}
\end{equation}
are commutative for $i=0,1$.

\subsection{Extended diagram for categories of crossed modules} By using the commutative diagrams \eqref{diagramLbDias1},
\eqref{diagramLbDias2}, \eqref{diagramAsLie}, \eqref{diagramAsDias} and \eqref{diagramLieLb}, the commutative diagram \eqref{diagram1}
 can be extended to crossed modules. Moreover, we have the following result:

\begin{theorem} In the following parallelepiped of categories and functors
%\[
%\begin{tikzcd}[row sep=40, column sep=40]
%& \As \ar[rrr, shift right=1.8, "\Liea" swap, "\perp"] \ar[dd, dashed, "\I_i", near start] \ar[dl, shift left=1.2, "\supset" rotate=40, swap, "\vdash" rotate=130, swap] & & & \Lie \ar[lll, shift right=2, "\U" swap] \ar[dd, "\I'_i" near start] \ar[dl, shift right=0.8, "\vdash" rotate=-50, "\subset" rotate=-140]\\
%\Di \ar[rrr, shift left=1.8, "\top" swap, "\LB"] \ar[dd, "\J_i" swap, near start] \ar[ur, shift left=2.4, "\AS \ " rotate=40, near end] & & & \Lb \ar[lll, shift left=2, "\Ud"] \ar[dd, near start, "\J'_i"] \ar[ur, shift right=2.8, "\Liel" rotate=40, near start, swap] &\\
%& \XAs \ar[rrr, dashed, shift right=1.8, "\XLiea" swap, "\perp"] \ar[dl, dashed, shift left=1.2, "\supset" rotate=40, swap, "\vdash" rotate=130, swap] & & & \XLie \ar[lll, dashed, shift right=2, "\XU" swap] \ar[dl, shift right=0.8, "\vdash" rotate=-50, "\subset" rotate=-140]\\
%\XDi \ar[rrr, shift left=1.8, "\top" swap, "\XLB"] \ar[ur, dashed, shift left=2.4, "\XAS \ " rotate=40, near end] & & & \XLb \ar[lll, shift left=2, "\Ud"] \ar[ur, shift right=2.8, "\XLiel" rotate=40, near start, swap] &
%\end{tikzcd}
%\]
\

\centering
\includegraphics[width=9cm]{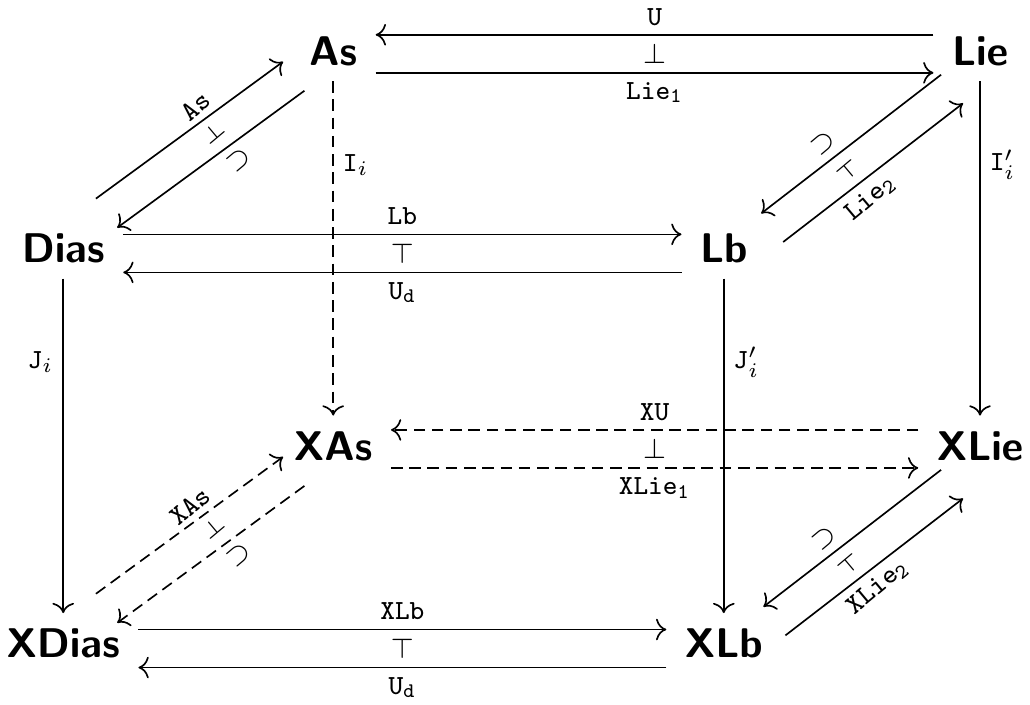}
\end{theorem}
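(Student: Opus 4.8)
The plan is to prove commutativity of the parallelepiped by checking that each of its six faces commutes. Take the base square \eqref{diagram1} as the bottom face and the analogous square of the crossed-module categories $\XAs,\XLie,\XDi,\XLb$ as the top face; the four vertical faces are then precisely the squares already shown to commute, namely \eqref{diagramAsDias} (through $\As,\Di,\XAs,\XDi$), \eqref{diagramLieLb} (through $\Lie,\Lb,\XLie,\XLb$), \eqref{diagramAsLie} (through $\As,\Lie,\XAs,\XLie$), and \eqref{diagramLbDias1} together with \eqref{diagramLbDias2} (through $\Di,\Lb,\XDi,\XLb$). Since the bottom and all four vertical faces commute for $i=0,1$, the whole task reduces to establishing commutativity of the top face, i.e.\ the crossed-module analogue of \eqref{diagram1}.

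For the top face I would first cut the work in half by uniqueness of adjoints. The outer square consists of left adjoints and the inner square of their right adjoints: the left adjoint $\XAS\circ\XUd$ has right adjoint $\XLB$ preceded by the inclusion $\XAs\hookrightarrow\XDi$, while the left adjoint $\XU\circ\XLiel$ has right adjoint the inclusion $\XLie\hookrightarrow\XLb$ preceded by $\XLiea$. As two left adjoints are naturally isomorphic exactly when their right adjoints are, it suffices to produce the single natural isomorphism of the inner square, between the two functors $\XAs\to\XLb$ obtained by applying $\XLB$ after the inclusion $\XAs\hookrightarrow\XDi$, and by applying the inclusion $\XLie\hookrightarrow\XLb$ after $\XLiea$.

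I would then verify this inner square directly on an object $R\xrightarrow{\rho}A$ of $\XAs$. Regarding it in $\XDi$ through $a\dashv r=a\vdash r=ar$ and $r\dashv a=r\vdash a=ra$ and applying $\XLB$ yields the Leibniz crossed module $\LB(R)\xrightarrow{\LB(\rho)}\LB(A)$ whose brackets are the commutators $[x,y]=xy-yx$ and whose action is $[a,r]=ar-ra$, $[r,a]=ra-ar$. On the other side, $\XLiea(R\xrightarrow{\rho}A)$ is componentwise the Lie crossed module $\Liea(R)\xrightarrow{\Liea(\rho)}\Liea(A)$ carrying exactly these commutator brackets and this action, and the inclusion $\XLie\hookrightarrow\XLb$ leaves the data untouched. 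Hence both composites return the same Leibniz crossed module; at the level of components this identification is the base-case inner-square commutativity $\LB\circ(\text{inclusion})=(\text{inclusion})\circ\Liea$ already contained in \eqref{diagram1}, and what remains is only the routine matching of the four dialgebra action maps $\dashv,\vdash$ against the single Leibniz action. Naturality in $(\al,\be)$ being immediate, the inner square commutes, whence the outer square and the top face commute as well.

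The step I expect to be the main obstacle is exactly this top face, because it does \emph{not} follow formally from the other five: the vertical embeddings $\I_i,\I'_i,\J_i,\J'_i$ are full but not equivalences, so commutativity of the bottom and side faces cannot by itself force the top. One is genuinely obliged to check that the crossed-module \emph{actions}, and not merely the underlying component algebras, are carried across compatibly, and this is where the two dialgebra products $\dashv,\vdash$ must be reconciled with the single Leibniz and Lie bracket; the reduction by uniqueness of adjoints is precisely what confines this bookkeeping to one square rather than four.
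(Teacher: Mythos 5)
Your proposal is correct and takes essentially the same approach as the paper: reduce the whole parallelepiped to the crossed-module face, check the right-adjoint (inner) square $\XLB\circ\subset\;=\;\subset\circ\XLiea$ directly, and deduce the left-adjoint (outer) square by uniqueness of adjoints, since $\XAS\circ\XUd$ and $\XU\circ\XLiel$ are left adjoint to these two equal composites. The only differences are cosmetic: the face you call the top is drawn as the base in the paper's picture, and you spell out the verification of the inner square that the paper dismisses as trivial.
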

\noindent \emph{all faces are commutative (each one in two different directions), for} $i=0,1$.
\begin{proof}
We only need to check the commutativity of the base of the parallelepiped. The equality  $\subset \circ \XLiea=\XLB\circ \subset$ is trivially verified.
 As composition of left adjoint functors, $\XAS\circ \XUd$ is left adjoint to $\XLB\circ \subset \colon \XAs\to \XLb$ and analogously,
   $\XU\circ \XLiel$ is left adjoint to $\subset \circ \XLiea \colon \XAs\to \XLb$. Since $\subset \circ \XLiea=\XLB\circ \subset$,
   it follows that $\XAS\circ \XUd$ and $\XU\circ \XLiel$ are naturally isomorphic.
\end{proof}
\begin{remark}
	It would be possible to include the adjunctions described in Remarks~\ref{remark_adj_Di_Lb} and \ref{remark_adj_As_Lie}
in the above parallelepipeds. By doing so, we would get four different options for the lateral faces (instead of two), which would trivially commute.
\end{remark}


\begin{thebibliography}{99}

\bibitem{CaCaKhLa} J. M. Casas, R. F. Casado, E. Khmaladze and M. Ladra, {Universal enveloping crossed module of a Lie crossed module}, \emph{Homol. Homot. Appl.} \textbf{16(2)} (2014) 143--158.

\bibitem{CaDaLa} J. M. Casas, T. Datuashvili and M. Ladra, {Universal strict general actors and actors
in categories of interest}, \emph{Appl. Categor. Struct.} \textbf{18} (2010) 85--114.

\bibitem{CaInKhLa} J. M. Casas, N. Inassaridze, E. Khmaladze and M. Ladra, {Adjunction between crossed modules of groups and algebras}, \emph{J. Homot. Relat. Struct.}  \textbf{9} (2014) 223--237.

\bibitem{CaKhLa0} J. M. Casas, E. Khmaladze and M. Ladra, {Crossed modules for Leibniz n-algebras}, \emph{Forum Math.}  \textbf{28} (2008) 841--858.

\bibitem{CaKhLa} J. M. Casas, E. Khmaladze and M. Ladra, {Low-dimensional non-abelian Leibniz cohomology}, \emph{Forum Math.} \textbf{25} (2013) 443--469.

\bibitem{DoInKhLa} G. Donadze, N. Inassaridze, E. Khmaladze and M. Ladra, {Cyclic homologies of crossed modules of algebras}, \emph{J. Noncommut. Geom.} \textbf{2} (2012) 749--771.

\bibitem{El1} G. J. Ellis, {Higher dimensional crossed modules of algebras}, \emph{J. Pure Appl. Algebra} \textbf{52} (1988) 277--282.

\bibitem{KaLo} C. Kassel and J.-L. Loday, {Extensions centrales d'alg\`ebres
de Lie},  \emph{Ann. Inst. Fourier (Grenoble)} \textbf{32} (1982)  119--142.

\bibitem{Lo2} J.-L. Loday, {Spaces with finitely many non-trivial homotopy groups}, \emph{J. Pure Appl. Algebra} \textbf{24} (1982) 179--202.

\bibitem{Lo3} J.-L. Loday, {Une version non commutative des alg{\`e}bres de Lie: les alg{\`e}bres de Leibniz}, \emph{Enseign. Math. (2)} \textbf{39} (1993) 269--293.

\bibitem{Lo} J.-L. Loday, \emph{Dialgebras}. Dialgebras and related operads, pp. 7--66, Lecture Notes in Math., 1763, Springer, Berlin, 2001.

\bibitem{LoPi} J.-L. Loday and ~T.~Pirashvili, {Universal enveloping algebras of Leibniz algebras and (co)homology},
 \emph{Math. Ann.} \textbf{296(1)} (1993) 139--158.

\bibitem{Lu} A.S.-T. Lue, {Non-abelian cohomology of associative algebras}, \emph{Quart. J. Math. Oxford Ser. (2)} \textbf{19} (1968) 159--180.

\bibitem{Mo} A. Montoli, {Action accessibility for categories of interest}, \emph{Theory and Appl. of Categories} \textbf{23(1)} (2010) 7--21.

\bibitem{Or} G. Orzech, {Obstruction theory in algebraic categories I}, \emph{J. Pure and Appl. Algebra} \textbf{2} (1972) 287--314.

\bibitem{Pa} S. Paoli, \emph{Internal categorical structures in homotopical algebra}. Towards higher categories, pp. 85--103, IMA Vol. Math. Appl., 152, Springer, New York, 2010.

\bibitem{Po} T. Porter, {Extensions, crossed modules and internal categories in categories of groups with operations}, \emph{Proc. Edinburgh Math. Soc.} \textbf{30} (1987) 373--381.

\end{thebibliography}
\end{document}